\newcommand{%
  \tikzsetnextfilename{}%
  \input{Pictures/.tikz}%
}[1]{%
  \tikzsetnextfilename{#1}%
  \input{Pictures/#1.tikz}%
}
\string\usetikzlibrary{decorations.markings} to use arrow with markings}{}}{}%
\tikzset{
circled/.code={\tikzset{markwith={\draw (0,0) circle (.375ex);}};},
slashed/.code={\tikzset{markwith={\draw[-] (-.4ex,-.4ex) -- (.4ex,.4ex);}};},
markwith/.code={
\pgfutil@ifundefined{tikz@library@decorations.markings@loaded}%
{\pgfutil@packageerror{tikz-cd}{You need to say %
\string\usetikzlibrary{decorations.markings} to use arrow with markings}{}}{}%
\pgfkeysalso{/tikz/postaction={/tikz/decorate,
/tikz/decoration={
markings,
mark = at position 0.5 with
{#1}}}}},
}
\providecommand{\leftsquigarrow}{%
  \mathrel{\mathpalette\reflect@squig\relax}%
}
\newcommand{\reflect@squig}[2]{%
  \reflectbox{$\m@th#1\rightsquigarrow$}%
}
\DeclareFontFamily{U}{mathx}{\hyphenchar\font45}
\DeclareFontShape{U}{mathx}{m}{n}{
      <5> <6> <7> <8> <9> <10>
      <10.95> <12> <14.4> <17.28> <20.74> <24.88>
      mathx10
      }{}
\DeclareSymbolFont{mathx}{U}{mathx}{m}{n}
\DeclareMathAccent{\widecheck}{0}{mathx}{"71}
\DeclareMathAlphabet{\mathbbm}{U}{bbm}{m}{n}
\newcolumntype{C}{>{$}c<{$}}
\theoremstyle{remark}
\numberwithin{equation}{section}
\theoremstyle{definition}
\newtheorem*{theorem*}{Theorem}
\newtheorem*{definition*}{Definition}
\newtheorem{theorem}{Theorem}[section]
\newtheorem{definition}[theorem]{Definition}
\newtheorem*{proposition*}{Proposition}
\newtheorem{lemma}[theorem]{Lemma}
\newtheorem{corollary}[theorem]{Corollary}
\newtheorem{remark}[theorem]{Remark}
\newtheorem{example}[theorem]{Example}
\newtheorem*{example*}{Example}
\newcommand{\lrp}[1]{\left(#1\right)}
\newcommand{\Q}{\mathbb{Q} }
\newcommand{\R}{\mathbb{R} }
\newcommand{\Z}{\mathbb{Z} }
\newcommand{\oversetcustom}[3][0ex]{%
  \mathrel{\mathop{#3}\limits^{
    \vbox to#1{\kern-0.5\ex@
    \hbox{$\scriptstyle#2$}\vss}}}}
\newcommand{\T}{\mathbb{T}}
\newcommand{\gv}{\mathbf{g} }
\newcommand{\cv}{\mathbf{c} }
\newcommand{\cA}{\mathcal{A} }
\newcommand{\cAp}{\mathcal{A}_{\mathrm{prin}}}
\newcommand{\cXp}{\mathcal{X}_{\mathrm{prin}}}
\newcommand{\cX}{\mathcal{X} }
\newcommand{\cV}{\mathcal{V} }
\newcommand{\tf}{\vartheta }
\newcommand{\Iuf}{I_{\text{uf}}}
\newcommand{\udim}{\underline{\dim}}
\newcommand{\seed}{\textbf{s}}
\DeclareMathOperator{\proj}{proj}
\DeclareMathOperator{\inj}{inj}
\newcommand{\KKb}{\mathsf{K}^{\rm b}}
\newsavebox{\mybox}
\newcommand{\back}[1]{%
  \ThisStyle{\ifmmode%
    \savebox{\mybox}{$\SavedStyle#1$}%
    \reflectbox{\usebox{\mybox}}%
  \else%
    \savebox{\mybox}{#1}%
    \reflectbox{\usebox{\mybox}}%
  \fi%
}}
\DeclareMathOperator{\Ext}{Ext}
\DeclareMathOperator{\Hom}{Hom}
\DeclareMathOperator{\Spec}{Spec}
\newcommand{\rank}{\operatorname{rank}}
\newcommand{\ie}{{\em i.e. }}
\DeclareFontFamily{U}{musix}{}
\DeclareFontShape{U}{musix}{m}{n}{<-> s*[1.01] musix11}{}
\newcommand{\rmesh}{\overrightarrow{\text{mesh}}}
\title[The cluster complex for cluster $\cX$-varieties and representations of acyclic quivers]{The cluster complex for cluster Poisson varieties and representations of acyclic quivers}
\author{Carolina Melo and Alfredo N\'ajera Ch\'avez}
\begin{document}

\maketitle

\begin{abstract}
Let $\mathcal{X}$ be a skew-symmetrizable cluster Poisson variety. The cluster complex $\Delta^+(\mathcal{X})$ was introduced in \cite{GHKK}. It codifies the theta functions on $\mathcal{X}$ that restrict to a character of a seed torus. 
Every seed ${ \bf s}$ for $\mathcal{X}$ determines a fan realization $\underline{\Delta^+_{\bf s}(\mathcal{X})}$ of $\Delta^+(\mathcal{X})$.
For every $\seed$ we provide a simple and explicit description of the cones of $\underline{\Delta^+_{\bf s}(\mathcal{X})}$ and their facets using {\bf c}-vectors.
Moreover, we give formulas for the theta functions parametrized by the integral points of $\underline{\Delta^+_{\bf s}(\mathcal{X})}$ in terms of $F$-polynomials.

In case $\mathcal{X}$ is skew-symmetric and the quiver $Q$ associated to ${\bf s}$ is acyclic, we describe the normal vectors of the supporting hyperplanes of the cones of $\underline{\Delta^+_{\bf s}(\mathcal{X})}$ using {\bf g}-vectors of (non-necessarily rigid) objects in $\mathsf{K}^{\rm b}(\text{proj} \; kQ)$.
\end{abstract}

\tableofcontents

\section{Introduction}
\subsection{Overview} Cluster complexes are a class of simplicial complexes that naturally arise in the theory of cluster algebras.
They were introduced by Fomin and Zelevinsky in \cite{FZ_II} as an important tool in the classification of cluster algebras of finite cluster type. 
Since then, many authors have considered these complexes as they turned out to be very relevant in various contexts.
In particular, the work of Fock--Goncharov \cite{FG_cluster_ensembles} and of Gross-Hacking-Keel-Kontsevich \cite{GHKK} show the importance of cluster complexes in the study of cluster varieties and their rings of regular functions.

Throughout this introduction we use the standard terminology employed in the theory of cluster algebras and cluster varieties, the precise definitions will be recalled throughout the text.

Cluster varieties are schemes that can be obtained gluing a (possibly infinite) collection of algebraic tori --the \emph{seed tori}-- using distinguished birational maps called \emph{cluster transformations}.
The gluing is governed by a combinatorial framework that is completely determined by the choice of an \emph{initial seed}.
In \cite{GHKK}, Gross, Hacking, Keel and Kontsevich introduced trendsetting techniques for the study of cluster algebras and cluster varieties. In particular, they defined theta functions on cluster varieties and showed that cluster complexes allow to identify those theta functions that restrict to a character of a particular seed torus. There are two kinds of cluster varieties: cluster $\cA$-varieties, also known as cluster $K_2$ varieties, and cluster $\cX$-varieties, also known as cluster Poisson varieties.
From the perspective of \cite{GHKK}, every cluster variety $\cV$ (either of type $\cA$ or $\cX$) has an associated \emph{(Fock--Goncharov) cluster complex} $\Delta^+(\cV)$.
Every choice of seed $\seed$ gives rise to a fan realization $\underline{\Delta^+_\seed(\cV)}$ of $\Delta^+(\cV)$. 
From this point of view the usual cluster complexes introduced by Fomin and Zelevinsky are associated to the cluster $\cA$-varieties and the corresponding fan realizations are the well-known \emph{{\bf g}-vector fans}.
The aim of this paper is to initiate a systematic study of the cluster complexes associated to cluster Poisson varieties \emph{without frozen directions}\footnote{Unlike the $\cA$ case, the cluster complex associated to a cluster Poisson variety can change if we add frozen directions, see Remark~\ref{rem:frozen_directions}.}. 

\subsection{The skew-symmetrizable case} In order to state our description of the cluster complex associated to a cluster Poisson variety we need to recollect some well known concepts in the theory of cluster algebras. So let $\cA$ and $\cX$ be a pair of skew-symmetrizable cluster varieties associated to the same initial seed $\seed$.
Let $B_{\seed}$ be the $n\times n$ skew-symmetrizable integer matrix associated to $\seed$. 
We consider the (rooted) $n$-regular tree $\mathbb T_n$. For simplicity we identify the vertices of $\mathbb T_n$ with the seeds mutation equivalent to $\seed$, let $\seed$ be the root (\ie a distinguished vertex) of $\T_n$ and write $ \seed'\in \T_n$ to indicate that $\seed'$ is a vertex of $\T_n$.
Associated to every $n\times n$ skew-symmetrizable matrix there are two families of $n\times n$ integral matrices called the $\cv$- and $\gv$-matrices, see \cite{NZ}.
The elements of these families are indexed by the vertices of $\mathbb T_n$. 
Given an initial skew-symmetrizable matrix $B$ and $\seed'\in \mathbb T_n$ we let $C^{B}_{\seed'}$ (resp. $G^B_{\seed'}$) denote the $\cv$-matrix (resp. $\gv$-matrix) indexed by the vertex $\seed'$ of $\mathbb T_n$ (taking $B$ as the initial matrix).
In particular, we consider the skew-symmetrizable matrices $B_\seed$ and $-B^T_{\seed}$, where $X^T$ denotes the transpose of a matrix $X$.
The columns of $C^{-B_{\seed}^T}_{\seed'}$ (resp. $G^{B_\seed}_{\seed'}$) are $c_{1;\seed'}, \dots ,c_{n;\seed'}$ (resp. $g_{1;\seed'}, \dots ,g_{n;\seed'}$) and the vectors that arise in this way are the $\cv$-vectors associated to $-B^T_\seed$ (resp. {\bf g}-vectors associated to $B_\seed$).
Let $N$ (resp. $M^\circ$) be the character lattice of the seed tori in the atlas of $\cX $ (resp. $\cA$).
In particular the $\cv$-vectors (resp. $\gv$-vectors) associated to $-B^T_\seed$ (resp. $B_\seed$) belong to $N$ (resp. $M^\circ$).
Let $N_{\R}= N\otimes \R$, $M^\circ_{\R}=M^\circ \otimes \R$ and $\mathcal{G}_{\seed'}$ be the cone in $M^\circ_{\R}$ spanned by the $\gv$-vectors in $G^{B_{\seed}}_{\seed'}$.
Finally, consider the cluster ensemble lattice map $p^*:N \to M^\circ$ associated to $\cA$ and $\cX$. 
The ambient vector space for $\underline{\Delta^+_{\seed}(\cA)}$ (resp. $\underline{\Delta^+_{\seed}(\cX)}$) is $M^\circ_{\R}$ (resp. $N_{\R}$). 
The following summarizes Lemma~\ref{cone}, Corollary~\ref{conedescription} and Lemma~\ref{initiallemma} bellow.

\begin{theorem*}
    $(1)$ For every $\seed'\in \mathbb T_n$ the cone $(p^*)^{-1}(\mathcal{G}_{\seed'})$ is a cone of the fan $\underline{\Delta^+_{\seed}(\cX)}$ and each of its cones arises in this way. Moreover, if for $\beta\in N_{\R}$ we let $\beta_\seed$ be the column vector obtained by writing $\beta$ in the basis of $N_{\R}$ determined by $\seed$ then
    \[
(p^*)^{-1}(\mathcal G_{\seed'})=\{\beta \in N_\R \; | \; (C_{\seed'}^{-B_{\seed}^T})^TB_{\seed} \beta_{\seed} \geq \textbf{0}\}.
\]
$(2)$ If $p^*(\beta) \in \mathcal{G}_{\seed'}$ then the theta function parametrized by $\beta_{\seed}=(\beta_1,\dots, \beta_n)$ is given by
\[
\vartheta_{\beta}^{\mathcal{X}}:=\prod_{i=1}^{n} y_i^{\beta_i}(F_{i;\seed'}(y_1, \ldots, y_n))^{c^T_{i;\seed'}B_{\seed} \beta_{\seed}},
\]
where $y_1,\dots, y_n$ are the coordinates of the seed torus of $\cX$ determined by $\seed$ and $F_{i;\seed'}$ is the $F$-polynomial associated $i$ and $\seed'$ (taking $B_\seed$ as initial exchange matrix).
\end{theorem*}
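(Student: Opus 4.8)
The plan is to reduce both assertions to standard facts about the cluster $\cA$-variety by transporting everything through the cluster ensemble map $p\colon\cA\to\cX$ and its linearization $p^*\colon N\to M^\circ$, whose matrix in the bases attached to $\seed$ is $B_\seed$ (so that $p^*(\beta)$ has coordinate vector $B_\seed\beta_\seed$, and on functions $p^*(y_i)=\hat y_i:=\prod_j x_j^{b_{ji}}$). For Part $(1)$ I would first invoke the description of the cluster complex from \cite{GHKK}: the chamber structure of the $\cX$-scattering diagram is the $p^*$-preimage of that of the $\cA$-scattering diagram, so that $\Delta^+_\seed(\cX)=(p^*)^{-1}\big(\Delta^+_\seed(\cA)\big)$ as fans. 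Since $\Delta^+_\seed(\cA)$ is the $\mathbf g$-vector fan, whose maximal cones are the $\mathcal{G}_{\seed'}$, every cone of $\Delta^+_\seed(\cX)$ is of the form $(p^*)^{-1}(\mathcal{G}_{\seed'})$, with faces pulling back to faces; this gives the first assertion.

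For the half-space description I would use that $\mathcal{G}_{\seed'}$ is a full-dimensional simplicial cone: the matrix $G^{B_\seed}_{\seed'}$ is invertible (its determinant is $\pm1$), so $v\in M^\circ_{\R}$ lies in $\mathcal{G}_{\seed'}$ exactly when $(G^{B_\seed}_{\seed'})^{-1}v\geq\mathbf 0$. Taking $v=p^*(\beta)=B_\seed\beta_\seed$ turns membership into $(G^{B_\seed}_{\seed'})^{-1}B_\seed\beta_\seed\geq\mathbf 0$. The final step is the tropical duality of Nakanishi--Zelevinsky \cite{NZ}, which identifies $(G^{B_\seed}_{\seed'})^{-1}=(C^{-B^T_\seed}_{\seed'})^{T}$; substituting this yields precisely the stated inequalities. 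In the skew-symmetric case $-B^T_\seed=B_\seed$ and this is the usual $\mathbf g$/$\mathbf c$ duality, while in general one must track the symmetrizer, which only rescales rows by positive scalars and hence does not affect the sign conditions.

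For Part $(2)$ I would compute $\vartheta^{\cX}_\beta$ by pulling it back to $\cA$. By the compatibility of theta functions with $p$ from \cite{GHKK}, $p^*\vartheta^{\cX}_\beta=\vartheta^{\cA}_{p^*(\beta)}$, and since $p^*(\beta)\in\mathcal{G}_{\seed'}$ the right-hand side is the cluster monomial $\prod_i A_{i;\seed'}^{a_i}$, where $(a_1,\dots,a_n)^T=(G^{B_\seed}_{\seed'})^{-1}B_\seed\beta_\seed=(C^{-B^T_\seed}_{\seed'})^{T}B_\seed\beta_\seed$ expresses $p^*(\beta)$ in the $\mathbf g$-vector basis; thus $a_i=c^T_{i;\seed'}B_\seed\beta_\seed$ by the computation in Part $(1)$. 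Substituting the Fomin--Zelevinsky expansion $A_{i;\seed'}=x^{g_{i;\seed'}}F_{i;\seed'}(\hat y)$ and collecting monomial parts gives
\[
p^*\vartheta^{\cX}_\beta=x^{\,p^*(\beta)}\prod_{i=1}^{n}F_{i;\seed'}(\hat y)^{\,c^T_{i;\seed'}B_\seed\beta_\seed}.
\]
Because $p^*$ sends $\prod_i y_i^{\beta_i}$ to $x^{\,p^*(\beta)}$ and $F_{i;\seed'}(y)$ to $F_{i;\seed'}(\hat y)$, comparing the two sides recovers the claimed formula for $\vartheta^{\cX}_\beta$.

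The main obstacle I anticipate is fixing conventions so that the tropical-duality identity $(G^{B_\seed}_{\seed'})^{-1}=(C^{-B^T_\seed}_{\seed'})^{T}$ holds on the nose; this is exactly what forces the appearance of $-B^T_\seed$ rather than $B_\seed$, and the skew-symmetrizable bookkeeping with the symmetrizer must be done carefully. A second subtlety is that $p^*$ need not be injective when $B_\seed$ is degenerate, so the final descent from an identity on $\cA$ to one on $\cX$ is not immediate; I would resolve this by running the argument through principal coefficients, where the corresponding ensemble map is injective, and then specializing, or alternatively by substituting directly the expansion of the $\cX$-cluster variables $X_{i;\seed'}$ in terms of $y$ and the $F$-polynomials and matching leading terms $y^\beta$. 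Finally, although the cluster-monomial step is phrased for lattice points, the resulting expression is multiplicative in $\beta$ and therefore extends to all real points of the cone by continuity.
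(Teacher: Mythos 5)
Your proposal is correct and is essentially the paper's own argument: the half-space description of $(p^*)^{-1}(\mathcal G_{\seed'})$ via invertibility of $G^{B_{\seed}}_{\seed'}$ plus Nakanishi--Zelevinsky tropical duality is exactly the proof of Corollary~\ref{conedescription}, and your fallback for part (2) --- running the pull-back computation through principal coefficients because $p^*$ may fail to be injective (a real issue, e.g.\ $\ker(p^*)\neq 0$ in the paper's $\mathbb{A}_3$ example) --- is precisely the proof of Lemma~\ref{initiallemma}, which pulls back along $\tilde p\colon \cAp\to\cX$, whose lattice map $\beta\mapsto(p^*(\beta),\beta)$ is always injective. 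The only repackaging is in part (1): the fact you quote from \cite{GHKK} (that $\Delta^+_{\seed}(\cX)$ is the $p^*$-preimage of the $\gv$-vector fan) is not stated there in that form --- GHKK describe $\Delta^+_{\seed}(\cX)$ as $\Delta^+_{\seed}(\cAp)\cap(w^T)^{-1}(0)$ --- and the translation to the $p^*$-preimage statement, using that the cones of $\Delta^+_{\seed}(\cAp)$ are $\mathcal G_{\seed'}\times N_{\R}$ and that $(w^T)^{-1}(0)=\{(p^*(n),n)\mid n\in N_{\R}\}$, is itself the content of Lemma~\ref{cone}, so that step should be written out rather than cited.
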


The $\gv$-vectors in a $\gv $-matrix associated to $B_{\seed}$ span a maximal cone of $\underline{\Delta^+_\seed(\cA)}$. Item $(1)$ of the theorem above tells us that every $\cv$-matrix also determines a (not necessarily maximal) cone of $\underline{\Delta^+_{\seed}(\cX)} $, however, in a different way. 
That is, the $\cv$-vectors of a $\cv$-matrix in general \emph{do not} span a cone of $\underline{\Delta^+_{\seed}(\cX)} $ but such $\cv$-vectors determine the equations of the supporting hyperplanes of a cone of $\underline{\Delta^+_{\seed}(\cX)} $ (the dimension of $(p^*)^{-1}(\mathcal{G}_{\seed'})$ depends on $\seed'$).
Standard results in polyhedral geometry allow us to describe the dimension of $(p^*)^{-1}(\mathcal{G}_{\seed'})$ using the \emph{implicit equalities} of the system $(C_{\seed'}^{-B_{\seed}^T})^TB_{\seed} \beta_{\seed} \geq \textbf{0} $, see equation \eqref{eq:dim}. In particular, one can search for positive linear combinations of the {\bf c}-vectors $c^T_{1;\seed'}, \dots c^T_{n;\seed'}$ that lie in the kernel of $p^*$ in order to determine the implicit equalities of the system. 

\subsection{The skew-symmetric acyclic case} The supporting hyperplane of the cone $(p^*)^{-1}(\mathcal G_{\seed'})$ determined by the $\cv$-vector $c_{j;\seed'}^T$ is defined by the equation $c_{j;\seed'}^TB_\seed \beta_\seed=0$. 
 Writing the elements of $M^\circ$ in the basis determined by $\seed$, the normal vector of such hyperplane is $c_{j;\seed'}^T B_\seed \in M^\circ$. In the skew-symmetric acyclic case we are able to describe this normal vector using $\gv$-vectors (from the perspective of silting theory) and Auslander-Reiten theory. We proceed to explain this.

We assume from now on that the quiver $Q$ associated to $ B_\seed$ is acyclic and consider the path algebra $kQ$. 
Since we are in the skew-symmetric case we have that $M^\circ=M=\Hom (N,\Z)$.
Let $D^b_{kQ}$ be the derived category of the category $\text{mod}\ kQ$ of right $kQ$-modules and $\KKb(\proj kQ)$ be the homotopy category of bounded complexes of finitely generated projective $kQ$-modules.
We consider the Grothendieck groups $K_0 (D^b_{kQ})$ and $K_0(\KKb(\proj kQ))$ and think of the $kQ$-modules (resp. projective $kQ$-modules) as stalk complexes in $D^b_{kQ}$ (resp. in $\proj kQ$) concentrated in degree $0$. 

Recall that every $\cv$-vector is non-zero and is either positive (all its entries are non-negative) or negative (all its entries are non-positive). By \cite{Najera,Nagao}, we have that the positive $\cv$-vectors are in bijection with the dimension vectors of the rigid indecomposable $kQ$-modules. The lattice of dimension vector of objects in $D^b_{kQ}$ is  $K_0(D^b_{kQ})$ (we think of dimension vectors as row vectors).
Hence, every $\cv$-vector can be thought of as the (transpose of a) dimension vector of a stalk complex of $D^b_{kQ}$ (concentrated in degree $0$ if it is positive and in degree $-1$ if it is negative).
Similarly, every object $X$ in $K_0(\KKb(\proj kQ))$ has an associated $\gv$-vector $\gv^X_P$ (with respect to the silting complex $P=kQ$), see \cite{DIJ}.
We can think of $\gv^X_P$ as the class of $X$ in $K_0(\KKb(\proj kQ))$ (the actual integer vector is obtained expressing the class of $X$ in the basis of $K_0(\KKb(\proj kQ))$ given by the indecomposable projective $kQ$-modules).
We have the following result.

\begin{theorem*}[Theorem~\ref{dv-gv}]
    Let $X$ be an indecomposable object of $D^b_{kQ}$, then
    \[B_\seed  (\udim \; X)^T=-\gv^X_P -\gv^{\tau^{-1}X}_P=-\sum_{X_i\in \rmesh_X} \gv^{X_i}_P.\]
    where $\udim \ X$ is the dimension vector of $X$ and the sum runs over the modules $X_i$ in the middle of the mesh (in the Auslander-Reiten quiver of $D^b_{kQ}$) starting in $X$ and ending in $\tau^{-1}(X)$.
\end{theorem*}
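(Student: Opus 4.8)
The plan is to prove both equalities at the level of the Grothendieck group $K_0(D^b_{kQ})\cong K_0(\KKb(\proj kQ))$, the isomorphism being available because $kQ$ is hereditary (so that $\KKb(\proj kQ)\simeq D^b_{kQ}$ and $\gv^X_P$ is defined for \emph{every} object $X$, in particular for $\tau^{-1}X$). On this group there are two natural bases: the classes $[S_1],\dots,[S_n]$ of the simple modules, in which the class of $X$ is the (transpose of the) dimension vector $(\udim X)^T$, and the classes $[P_1],\dots,[P_n]$ of the indecomposable projectives, in which the class of $X$ is the $\gv$-vector $\gv^X_P$. Since $[P_j]=\sum_i C_{ij}[S_i]$ with $C$ the Cartan matrix of $kQ$, the two are related by $(\udim X)^T=C\cdot\gv^X_P$, i.e.\ $\gv^X_P=C^{-1}(\udim X)^T$. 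Every claim then becomes a linear identity in $(\udim X)^T$, so the argument reduces to identifying a few matrices.

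First I would dispatch the second equality, which does not involve $B_\seed$ at all. For $X$ indecomposable there is an Auslander--Reiten triangle $X\to\bigoplus_{X_i\in\rmesh_X}X_i\to\tau^{-1}X\to X[1]$ in $D^b_{kQ}$, whose middle term is precisely the sum over the mesh starting at $X$ and ending at $\tau^{-1}X$. Taking classes in $K_0$ and using that the alternating sum along a distinguished triangle vanishes gives $\sum_{X_i\in\rmesh_X}[X_i]=[X]+[\tau^{-1}X]$; applying the additive map $Y\mapsto\gv^Y_P$ yields $-\gv^X_P-\gv^{\tau^{-1}X}_P=-\sum_{X_i\in\rmesh_X}\gv^{X_i}_P$.

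For the first equality the key is to relate $B_\seed$, the Cartan matrix $C$, and the translate $\tau$, using two standard homological facts for the hereditary algebra $kQ$. First, the Euler matrix $E$, defined by $E_{ij}=\dim\Hom(S_i,S_j)-\dim\Ext^1(S_i,S_j)=\delta_{ij}-\dim\Ext^1(S_i,S_j)$, satisfies $E=C^{-T}$; since $\dim\Ext^1(S_i,S_j)$ counts the arrows of $Q$ between $i$ and $j$, its antisymmetrisation recovers the exchange matrix, $B_\seed=E-E^{T}=C^{-T}-C^{-1}$, with the sign conventions fixed above. Second, on $K_0(D^b_{kQ})$ the translate $\tau$ acts by the Coxeter matrix $\Phi=-C^{T}C^{-1}$, so $(\udim\tau^{-1}X)^T=\Phi^{-1}(\udim X)^T=-CC^{-T}(\udim X)^T$. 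Combining these with the change of basis gives
\begin{align*}
-\gv^X_P-\gv^{\tau^{-1}X}_P
&=-C^{-1}\bigl(I+\Phi^{-1}\bigr)(\udim X)^T\\
&=-C^{-1}\bigl(I-CC^{-T}\bigr)(\udim X)^T\\
&=\bigl(C^{-T}-C^{-1}\bigr)(\udim X)^T=B_\seed\cdot(\udim X)^T,
\end{align*}
which is the first equality.

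The main obstacle is not the algebra but the bookkeeping of conventions: left versus right $kQ$-modules, the sign relating $B_\seed$ to $Q$, the precise form of the Coxeter matrix, and the passage between row and column vectors must all be pinned down so that the three identities $\gv^X_P=C^{-1}(\udim X)^T$, $B_\seed=C^{-T}-C^{-1}$ and $[\tau^{-1}X]=\Phi^{-1}[X]$ hold \emph{simultaneously}; a single misplaced sign propagates to the final formula. I would fix all of these once on a small example, such as the $A_2$ quiver, and then argue in general. A secondary point worth stating carefully is that working in $D^b_{kQ}$ rather than in $\mathrm{mod}\,kQ$ is exactly what makes $\tau$ an autoequivalence acting uniformly by $\Phi$ on $K_0$, so that $[\tau^{-1}X]=\Phi^{-1}[X]$ holds for every indecomposable $X$ --- including shifts of projective and injective modules --- without the exceptions of the classical Auslander--Reiten translate.
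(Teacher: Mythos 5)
Your proof is correct, and its skeleton is the same as the paper's: both arguments reduce the statement to linear algebra in $K_0$, using the change of basis $\gv_P^X = C_{kQ}^{-1}(\udim X)^T$ (the paper's Lemma~\ref{lem:mesh}(i)), the identity $B_\seed = (C_{kQ}^{-1})^T - C_{kQ}^{-1}$ (the paper's Lemma~\ref{B-C}, proved exactly as you indicate, by antisymmetrizing the Euler form on the simples), and the Auslander--Reiten triangle plus additivity of $\gv$-vectors for the mesh-sum equality. The one genuine difference is how the term $\gv_P^{\tau^{-1}X}$ is handled. The paper proves $(C_{kQ}^{-1})^T(\udim X)^T = -\gv_P^{\tau^{-1}X}$ by hand: it takes a minimal injective presentation $0 \to X \to I' \to I'' \to 0$, applies the inverse Nakayama functor to obtain a minimal projective presentation of $\tau^{-1}X$, and concludes $-\gv_P^{\tau^{-1}X} = \gv_I^X = (C_{kQ}^{-1})^T(\udim X)^T$. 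You instead cite the standard fact that $\tau$ acts on $K_0(D^b_{kQ})$ by the Coxeter matrix $\Phi = -C_{kQ}^T C_{kQ}^{-1}$. Your conventions are mutually consistent: with the paper's Euler matrix $(C_{kQ}^{-1})^T$ and the Auslander--Reiten formula, one gets exactly this $\Phi$ acting on column dimension vectors, so $\gv_P^{\tau^{-1}X} = C_{kQ}^{-1}\Phi^{-1}(\udim X)^T = -(C_{kQ}^{-1})^T(\udim X)^T$, matching Lemma~\ref{lem:mesh}(ii), and your final computation $-C_{kQ}^{-1}\bigl(I - C_{kQ}(C_{kQ}^{-1})^T\bigr) = (C_{kQ}^{-1})^T - C_{kQ}^{-1} = B_\seed$ is right. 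The two routes are essentially equivalent --- the Coxeter-matrix formula is itself proved by the Nakayama-functor argument --- so yours is shorter but rests on a cited fact where the paper is self-contained; in exchange, you make explicit a point the paper leaves implicit, namely that working in $D^b_{kQ}$ is what makes $\tau$ act uniformly on all of $K_0$, with no exceptions for (shifted) projectives or injectives.
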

In particular, if $   c_{j;\seed'}^T=\udim \ X$ then (using the fact that $B_\seed $ is skew-symmetric) we obtain that 
\[
c_{j;\seed'}^T  B_\seed = \sum_{X_i\in \rmesh_X} {\bf g}_P^{X_i}.
\]

\subsection{Organization}

The structure of the paper is the following. In \S~\ref{sec:cluster_complex} we provide the background on cluster varieties and set up the notation that will be used throughout the text. For simplicity we restrict our attention to the skew-symmetric case as the notation in this case is simpler. 
In \S~\ref{sec:Desc_cluster_complex} we give the description of $\Delta^+_{\seed}(\cX) $ in terms of $\cv$-vectors. The description is given in the skew-symmetric case and in \S~\ref{skew-symmetrizable} we indicate how to generalize the results from the skew-symmetric to the skew-symmetrizable case. 
In \S~\ref{sec:theta_functions} we provide the formulas expressing the theta functions on $\mathcal{X}$ parametrized by the integral points of $\Delta^+_{\seed}(\cX)$ using $F$-polynomials. 
In \S~\ref{sec:supporting_hyperplanes} we describe the normal vectors of the supporting hyperplanes of the cones of $\Delta^+_\seed(\cX)$ using $\gv$-vectors provided the quiver $Q$ associated to $\seed$ is acyclic.

\section{Background on cluster varieties}
\label{sec:cluster_complex}

\subsection{Cluster varieties}
\label{sec:cv}
The reader is referred to \cite[\S2]{GHK_birational} and \cite{FG_cluster_ensembles} for a detailed account on cluster varieties, to \cite[\S3]{GHKK}, \cite[\S1.1]{FG_cluster_ensembles} and \cite[\S2.2]{BCMNC} for background on tropicalization of cluster varieties and to \cite[\S3]{GHKK} (resp. \cite[\S7.2]{GHKK}) for the details concerning the Fock--Goncharov cluster complex associated to cluster $\cA$-varieties (resp. cluster Poisson varieties). For simplicity throughout the paper we focus on the skew-symmetric case, however, some of our main results hold in the skew-symmetrizable case as explained in \S~\ref{skew-symmetrizable} and Remark~\ref{rem:tf_sk}.

We begin by setting the notation related to cluster varieties. Throughout this work we fix an algebraically closed field $k$ of characteristic $0$\footnote{Part of the framework in this paper related to representation theory does not require that $k$ is algebraically closed nor of characteristic $0$, however, these hypotheses are needed for the framework related to cluster varieties.}.
 The split torus over $k$ associated to a lattice $L $ is the scheme $T_L:= \Spec(k[L^*])$, where $L^*$ is the dual lattice to $L$. 

Recall (for instances from \cite{GHK_birational}) that in order to construct a cluster variety we need a \emph{fixed data} and a \emph{seed}.
In this section we introduce skew-symmetric cluster varieties without frozen directions. Hence the fixed data $\Gamma$ we consider consists of the following:
\begin{itemize}
    \item the set $I=I_{\rm uf}=\{1,\dots,n\}$;
    \item a lattice $N\cong \Z^n$; 
    \item the dual lattice $M = \Hom (N, \Z)$;
    \item a skew-symmetric bilinear form $\{ \cdot , \cdot \} : N \times N \rightarrow \Z$.
\end{itemize}
A seed is a tuple $\seed := ( e_i )_{i \in I}$ such that $\{ e_i \}_{i\in I}$ is a basis for $N$.
We let $\{ e^*_i \}_{i\in I}$ be the basis of $M$ dual to $\{ e_i \}_{i\in I}$. 
For $i,j\in I$ we let $b_{ij}:=\{e_j,e_i\}$ and consider the $n\times n $ skew-symmetric matrix
\[
B_{\seed}= (b_{ij}).
\]

The quiver associated to $B_\seed$ is denoted by $Q_{\seed}$. Its set of vertices is $I$ and the arrows between $i$ and $j$ are $|b_{ij}|$, where the arrows go from $i $ to $j$ if $b_{ij}$ is positive and from $j $ to $i$ if $b_{ij}$ is negative.

We fix a seed $ \seed$ and call it the initial seed.
Associated to $\Gamma$ and $\seed$ there are four cluster varieties (see \cite[Appendix A]{GHKK}): $\cA$, $\cX$, $\cAp$ and $\cXp$. 
These are schemes over $k$ although $\cAp$ has a very useful structure of a scheme over a Laurent polynomial ring, see \cite[\S3]{BFMNC} for further details. 
The dimension of $\cA$ and $\cX$ is $n$ whereas the dimension of $\cAp$ and $\cXp$ is $2n$. Moreover, these schemes are endowed with atlases of tori of the form
\[
\cA= \bigcup_{\seed'\in \T_n}T_{N, \seed'}, \quad \quad \cX= \bigcup_{\seed'\in \T_n}T_{M, \seed'},  \quad \quad \cAp= \bigcup_{\seed'\in \T_n}T_{N\oplus M, \seed'}, \quad \text{and}   \quad \cXp= \bigcup_{\seed'\in \T_n}T_{M\oplus N, \seed'},
\]
where the unions are taken over the vertices of the $n$-regular tree $\T_n$ and for every lattice $L$ the symbol $T_{L,\seed'} $ denotes a copy of $T_{L}$ indexed by $\seed'$. 
Given a cluster variety $\cV$, we denote by $\cV_{\seed'}$ the tori in its atlas parametrized by $\seed'$. 
For instances, $\cA_{\seed'}=T_{N,\seed'}$.
The precise way these tori are glued to each other is described in \cite[\S2]{GHK_birational}.
In fact, the schemes $\cAp$ and $\cXp$ are isomorphic as schemes over $k$, however, these schemes are endowed with different coordinate systems.

We frequently write $N_{\seed}$ (resp. $M_{\seed}$) to stress that $N$ is endowed with the basis $\{e_i\}_{i\in I}$ (resp. $\{e^*_i\}_{i\in I}$). In particular, if $v $ belongs to $ N$ (or to $M$) we write $v_{\seed}$ to denote the column vector expressing $v$ in the corresponding basis. Moreover, given an $n\times n$ matrix $X$ with entries in $\Z$, we write $X:N_{\seed} \to M_{\seed}$ to denote the $\Z$-linear map $N \to M$ induce by $ X$. 

The varieties introduced above are related to each other via distinguished maps, see \cite[\S2.1.3]{BCMNC} and \cite[Appendix A]{GHKK}. Here we recall the description of three of such maps and refer to \emph{loc. cit.} for further details.

The cluster varieties $ \cA$ and $\cX$ are related to each other via a \emph{cluster ensemble map} 
\[
p:\cA \to \cX.
\]
By a slight abuse of notation (that we carry through analogously for the rest of the maps in this subsection and the following one), the restriction of $p$ to the seed torus $\cA_{\seed'}$ is also denoted by $p$. This restriction is a 
 map $p:\cA_{\seed'} \to \cX_{\seed'}$ whose coordinate functions are monomials. Its pull-back is determined by the lattice map $p^*:N \to M$ given by $n_{\seed} \mapsto B_{\seed} n_{\seed}$.
We call $p^*$ a \emph{cluster ensemble lattice map}.

The variety $\cX$ can be described as a geometric quotient of $\cAp$ by the action of a torus canonically isomorphic to $T_N$. We let 
\[
\tilde{p}:\cAp \to \cX.
\]
be the associated quotient map.
Explicitly, the restriction of $\tilde{p} $ to a seed torus $(\cAp)_{\seed'}$ is a map of the form $\tilde{p}:(\cAp)_{\seed'}\to \cX_{\seed'}$ whose coordinates functions are monomials. Its pull-back is determined by the lattice map $\tilde{p}^*:  N \to M\oplus N$ given by $ n \mapsto  (p^*(n),n)$.

There is a distinguished fibration
\[
w:\cXp \to T_M.
\]
The restriction of $w $ to a seed torus $(\cXp)_{\seed'}$ is a map of the form $w:(\cXp)_{\seed'}\to T_M$ whose coordinates functions are monomials. Its pull-back is determined by the lattice map $w^*:N\to N\oplus M$ given by $n \mapsto (n, -p^*(n))$.

Both $\cXp$ and $\cX$ are Poisson varieties and there is a canonical inclusion
\[
\xi:\cA \hookrightarrow \cXp
\]
that realizes $\cA$ as a fiber of $w$. The restriction of $\xi $ to a seed torus $\cA_{\seed'}$ is a map of the form $w:\cA_{\seed'}\to (\cXp)_{\seed'}$ whose coordinates functions are monomials. Its pull-back is determined by the lattice map
$
\xi^*: N\oplus M \to M$ given by $ (n,m) \mapsto p^*(n)-m.$

\subsection{The weight map}

Let $T_L$ be the torus whose cocharacter lattice is $L$. A rational function $f$ on $T_L$ is positive if it can be expressed as a fraction $f=f_1/f_2$, where both $f_1$ and $f_2$ are a linear combination of characters of $T_L$ with coefficients in $\Z_{>0} $.
The positive rational functions on $T_L$ form a semifield inside $ k(T_L)$ denoted by $Q_{\rm sf}(L)$.
A rational map $g:T_L\dashrightarrow T_{L'}$ between two tori is a \emph{positive} if its pullback $g^*:k(T') \to k(T)$ restricts to an isomorphism $g^*:Q_{\rm sf}(L') \to Q_{\rm sf}(L)$.
Given a semifield $P$, we let $T_L(P)$ denote the set $P$ valued points of $T_L$. By definition $T_L(P)$ is the collection of the semifield homomorphisms from $Q_{\rm sf} (L)$ to $ P$ and is written as follows
\[
T_L(P):=\Hom_{\rm sf}(Q_{\rm sf} (L), P).
\]
Notice that the sublattice of monomials of $Q_{\rm sf}(L)$ is canonically identified with $L^*$. 
Considering $P$ just as an abelian group the restriction of an element of $Q_{\rm sf}(L)$ to $L^*$
determines a bijection $T_L(P) \overset{\sim}{\longrightarrow} \Hom_{\rm groups} (L^*, P) $. 
Hence we have a bijection
\begin{equation}
\label{eq:identification}
r:T_L(P)\overset{\sim}{\longrightarrow} L\otimes P
\end{equation}
obtained by composing $T_L(P) \overset{\sim}{\longrightarrow} \Hom_{\rm groups} (L^*, P) $ with the canonical isomorphism $\Hom_{\rm groups}(L^*, P) \cong L \otimes P$. We warn the reader that we use the symbol $r$ for any such identification, even if we vary the lattice $L$ and the semifield $P$.

In this paper we exclusively consider the semifields $\Z^T=(\Z, \max, +)$, $\Z^t=(\Z,\min, +)$ and their real counterparts $\R^T$ and $\R^t$. So from now on $P$ will denote one of these semifields.
We also have a canonical identification $i:T_L(\R^T)\to T_L(\R^t)$ given by a sign change. More precisely, we have a commutative diagram 
\[
\xymatrix{
T_L(\R^T)\ar_{i}[d] \ar^{r}[r] & L_{\R} \ar^{ -1\cdot }[d]\\
T_L(\R^t)  \ar^{r}[r] & L_{\R},
}
\]
where the horizontal arrow on the right is the multiplication by $-1$. 
For this reason, the inverse of $i$ is also denoted by $i$.

The \emph{tropicalization} (with respect to $P$) of a positive rational map $g:T_L\dashrightarrow T_{L'}$ between tori is
\begin{align*}
g(P): T_L(P) & \to T_{L'}(P)\\
 h \ & \mapsto \ h \circ g^*.    
\end{align*}
Since $g(\Z^T)= g(\R^T)\mid_{T_{L}(\Z^T)}$ we denote both $g(\R^T)$ and $g(\Z^T)$ simply by $g^T$.
The notation $g^t$ is defined analogously:
\[
g^T=g(\R^T)\quad \quad \text{and} \quad \quad g^t=g(\R^t).
\]

Moreover, for a function $f:T_{L}(P) \to T_{L'}(P')$ we let
\begin{equation}
\label{eq:ulnotation}
\underline{f}: P\otimes_{\Z}L \to P'\otimes_{\Z}L' 
\end{equation}
be the induced function.
More precisely, $\underline{f}$ is the only function making the following diagram commute
\[
\xymatrix{
T_L(P) \ar^{f}[r] \ar_{r}[d] & T_{L'}(P')\ar^{r}[d]\\
 P\otimes_{\Z}L \ar_{\underline{f}}[r] & P'\otimes_{\Z}L'.
}
\]
Similarly, given a set $S\subset T_L(P)$ we let
\begin{equation}
\label{eq:ulnotationset}
\underline{S}\subset P\otimes_{\Z}L
\end{equation}
be the image of $S$ under the identification $ T_{L}(P)\overset{\sim}{\longrightarrow}P\otimes_{\Z} L $ and for a set $\mathcal{G}\subset  P\otimes_{\Z}L$ we let 
\begin{equation}
\label{eq:ulnotationset2}
\widetilde{\mathcal{G}}\subset T_{L}(P)
\end{equation}
be its inverse image under the identification.

\begin{remark}
In the literature (in particular in \cite{GHKK}) functions of the form $f$ and $\underline{f}$ (and sets of the form $S$ and $\underline{S} $) are usually  denoted by the same symbol and treated indistinguishably, however, for sake of clarity we introduce the aforementioned notation.
\end{remark}

Consider the restriction of $w$ to a seed the torus $(\cXp)_{\seed}$. By a slight abuse of notation we denote this restriction again by $w$, so this is a map of the form $w:(\cXp)_\seed \to T_M$. The \emph{weight map} is the function 
\[
w^T: (\cXp)_{\seed}(\R^T)\to T_M(\R^T).
\]
The reader is referred to \S~\ref{sec:inequalities} for an explanation of this terminology (see in particular the discussion preceding  Lemma~\ref{lem:identification_w_slice}).
Under the canonical identifications of $ (\cXp)_{\seed}(\R^T)$ (resp. $T_M(\R^T)$) with $M_{\R} \oplus N_{\R}$ (resp. $M_{\R}$) the map $\underline{w^T}$ is given as follows:
\begin{align*}
\underline{w^T}&: M_{\R} \oplus N_{\R}\to M_{\R}\\
  & \ \ \ \ \ \ (m,n) \longmapsto m-p^*(n).
\end{align*}

\section{Description of the cluster complex for cluster Poisson varieties}
\label{sec:Desc_cluster_complex}

The aim of this section is to provide a description of the cluster complex associated to a cluster Poisson variety.
Before presenting such description we briefly discuss cluster complexes associated to cluster varieties and the cluster complexes associated to cluster algebras.

The cluster complex $\Delta^+(A)$ associated to a cluster algebra $A$ was introduced in \cite{FZ_II}. 
By definition, its vertices are the cluster variables and its simplices are the sets of compatible cluster variables.
So $\Delta^+(A)$ is independent of a choice of initial seed for $A$.
Moreover, for every $\seed \in \mathbb T_n$, the $\gv$-fan $\Delta^+_{\seed}(A)$ is a realization of $\Delta^+(A)$ as a rational polyhedral fan. We think of the $\Delta^+_{\seed}(A)$ as a fan living in $M_{\R}$.

From the perspective of \cite{GHKK}, every cluster variety $\cV$ endowed with a choice of seed torus $\cV_\seed $ has an associated \emph{Fock--Goncharov cluster complex} $\Delta^+_{\seed}(\cV)$, see Definition~7.9 of \emph{loc. cit.}.
The definition of $\Delta^+_{\seed}(\cV)$ is given using global monomials on $\cV$ (which we discuss in \S~\ref{sec:theta_functions}).
By construction we have that 
\[
\Delta^+_{\seed}(\cA) \subset \cX_\seed(\R^T) \quad \quad \text{and} \quad \quad \Delta^+_{\seed}(\cX) \subset \cA_\seed(\R^t).
\]
Using the identifications of $\cX_\seed(\R^T)$ (resp. $\cA_\seed(\R^t)$) with $M_{\R}$ (resp. $N_{\R}$), it is shown in Lemma 7.10 of \emph{loc. cit.} that $\underline{\Delta^+_{\seed}(\cV)}$ is a rational polyhedral fan and provide a description of $\Delta^+_{\seed}(\cX)$ using $\Delta^+_{\seed}(\cAp)$ and the weight map.
In \S~\ref{sec:inequalities} we recall this construction and describe the cones of $\Delta_{\seed}(\cX)$ using $\cv$-matrices and $\cv$-vectors. More precisely, we explain how every $\cv$-matrix determines a (not necessarily top dimensional) cone of $\Delta^+_{\seed}(\cX)$, and how every $\cv$-vector of a $\cv$-matrix $C$ determines a supporting hyperplane of a cone determined by $C$.

\subsection{Description of the cones and their supporting hyperplanes via {\bf c}-vectors} 
\label{sec:inequalities}

For brevity, in this paper we refer to $\Delta^+_{\seed}(\cV)$ as a $\cV$-cluster complex. 
It is well known that the $\cA$-cluster complex associated to a seed $\seed $ is identified with the fan realization induced by $\seed$ of the cluster complex associated to the corresponding cluster algebra. Namely,
for $\cA=\cA(\seed)$ and $A=A(\seed)$, we have that
\[
\underline{\Delta^+_{\seed}(\cA)} = \Delta^+_{\seed}(A). 
\]
The natural ambient space for $ \Delta^+_{\seed}(\cA)$ is the ``$\max$'' tropical space $\cX_{\seed}(\R^T)$, which is identified with $M_{\R}$. The complex $\Delta^+_{\seed}(\cAp)$ naturally lives inside  the ``$\max$'' tropical space $(\cXp)_{\seed}(\R^T)$, which is identified with $M_\R \oplus N_\R$. Working in the vector space identifications of the tropical and using the notation introduced in \eqref{eq:ulnotationset} spaces we have that 
\[
\underline{\Delta^+_{\seed}(\cAp)}= \underline{\Delta^+_{\seed}(\cA) }\times N_{\R}.
\]
In particular, the maximal cones of $\underline{\Delta^+_{\seed}(\cAp)}$ are not strictly convex in the sense that they contain a linear subspace.

As opposed to $\cA$ and $\cAp$, the $\cX$-cluster complex $\Delta^+_{\seed}(\cX)$ naturally lives inside the ``$\min$'' tropical space $\cA_{\seed}(\R^t)$, which is identified with $N_{\R}$.
We briefly explain this difference and refer to \cite[\S3.2.6]{BCMNC} for the relevant details.

There is a natural action of $T_N$ on $\cAp$ and every theta function on $\cAp$ is an eigenfunction with respect to this action, see \cite[Proposition 3.14]{BCMNC}. Since $\cX$ is a quotient of $\cAp$ by the action of $T_N$, the theta functions on $\cX$ are the functions on $\cX$ induced by the theta functions on $\cAp$ whose $T_N$-weight is $0$.
A key observation is that the $T_N$-weight of a theta function on $\cAp$ can be obtained tropicalizing the weight map $w:(\cXp)_\seed \to T_M$ see \cite[Proposition 3.14]{BCMNC} (notice that such proposition is stated in a more general situation; using the notation of \emph{loc. cit.} we are considering the case where $\cA=\cAp$).
In other words, if we consider the \emph{weight zero slice} associated to $\seed$, which is defined as $(w^T)^{-1}(0)\subset (\cXp)_\seed(\R^T)$, then the theta functions on $\cAp$ parametrized by the integral points in $(w^T)^{-1}(0)$ are of $T_N$-weight $0$. The following result was proved in \cite[Lemma 3.15]{BCMNC} in a more general situation.

\begin{lemma}
\label{lem:identification_w_slice}
    For every seed $\seed$ the image of the composition $ \xi^T \circ i:\cA_{\seed}(\R^t)\to (\cXp)_{\seed}(\R^T)$ is precisely $(w^T)^{-1}(0)$.
    Since $\xi^T \circ i$ is injective then 
    $\cA_{\seed}(\R^t)$ and $ (w^T)^{-1}(0)$ are in bijection.
\end{lemma}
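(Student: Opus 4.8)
The plan is to reduce the statement to linear algebra on the seed torus, working throughout under the identifications $\cA_\seed(\R^t)\cong N_\R$, $\cA_\seed(\R^T)\cong N_\R$ and $(\cXp)_\seed(\R^T)\cong M_\R\oplus N_\R$ given by the maps $r_\seed$. First I would record the shapes of the two maps being composed. Because $\xi$ restricts to a monomial map of seed tori $\cA_\seed\to(\cXp)_\seed$ (see \cite[Appendix A]{GHKK}), its tropicalization $\xi^T$ is the $\R$-linear extension of the underlying cocharacter map $N\to M\oplus N$, say $\nu\mapsto(m(\nu),\phi(\nu))$; since $\xi$ is an embedding, $\phi\colon N\to N$ is an isomorphism. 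The hypothesis that $\xi$ realizes $\cA$ as a fiber of $w$ means $w\circ\xi$ is constant equal to the identity of $T_M$, so by functoriality of tropicalization $w^T\circ\xi^T=0$; combined with the formula $w^T(m,n)=m-p^*(n)$ recalled above, this forces $m(\nu)=p^*(\phi(\nu))$, i.e. $\xi^T(\nu)=(p^*(\phi(\nu)),\phi(\nu))$. Finally, the commutative diagram defining $i$ shows that $i\colon\cA_\seed(\R^t)\to\cA_\seed(\R^T)$ is $\nu\mapsto-\nu$.

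Granting this, the inclusion of the image into $(w^T)^{-1}(0)$ is automatic, since $w^T\circ\xi^T\circ i=(w^T\circ\xi^T)\circ i=0$. For the reverse inclusion I would use that $(w^T)^{-1}(0)=\{(m,n):m=p^*(n)\}$ is exactly the graph of $p^*$. Computing,
\[
\xi^T\circ i(\nu)=\bigl(p^*(\phi(-\nu)),\,\phi(-\nu)\bigr),
\]
so as $\nu$ ranges over $N_\R$ and $\phi$ is an isomorphism, the second coordinate $\phi(-\nu)$ ranges over all of $N_\R$; hence the image is precisely the graph of $p^*$, which is $(w^T)^{-1}(0)$. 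This simultaneously gives surjectivity onto $(w^T)^{-1}(0)$, and injectivity is clear because $i$ is a bijection and $\xi^T$ is the (injective) graph-type map just described. Together these yield the claimed bijection.

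The only substantive point is the first paragraph, namely extracting the explicit form of $\xi^T$ from the definition of $\xi$. I would emphasise that the argument is robust: I never need to pin down $\phi$ precisely (one can check $\phi=\id$, but it is immaterial), only that the $M_\R$-component of $\xi^T$ is $p^*$ of its $N_\R$-component and that the latter is an isomorphism. Both facts are exactly what the fiber realization of $\cA$ inside $\cXp$ provides, and they are the content of the more general \cite[Lemma 3.15]{BCMNC}, which I would either invoke directly or reproduce for the present maps.
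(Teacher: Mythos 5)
Your argument is correct in substance, but it takes a genuinely different route from the paper, because the paper gives no proof of this lemma at all: it simply cites the more general \cite[Lemma 3.15]{BCMNC}, and later (in the proof of Lemma~\ref{cone}) quotes from that same source the explicit formula $j(n)=(p^*(n),n)$ for the composition $j=\xi^T\circ i$. What you have done is reconstruct that external lemma in the present special case: tropicalize the monomial maps on seed tori, use $w\circ\xi=e$ together with functoriality of tropicalization to force the graph form $\xi^T(\nu)=(p^*(\phi(\nu)),\phi(\nu))$, identify $(w^T)^{-1}(0)$ with the graph of $p^*$ via the stated formula $w^T(m,n)=m-p^*(n)$, and conclude by surjectivity of $\phi$. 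This buys self-containedness, and it makes explicit exactly the formula for $j$ that the paper needs later anyway; the cost is that you must justify structural facts about $\xi$ (monomiality on seed tori, the fiber property seed by seed) that the citation to \cite{BCMNC} subsumes.

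One imprecision should be fixed. You justify ``$\phi\colon N\to N$ is an isomorphism'' by ``$\xi$ is an embedding,'' but an embedding of tori alone does not force this: the cocharacter map $\nu\mapsto(A\nu,0)$, with $A\colon N\to M$ any isomorphism, defines a closed embedding $T_N\hookrightarrow T_{M\oplus N}$ whose second component is zero. What actually forces $\phi$ to be an isomorphism is the stronger hypothesis that $\xi$ maps $\cA$ isomorphically onto the \emph{whole} fiber $w^{-1}(e)$ (this is what ``realizes $\cA$ as a fiber of $w$'' means): the intersection of that fiber with a seed torus is the subtorus of $T_{M\oplus N}$ whose cocharacter lattice is the graph of $p^*$, and an isomorphism of $T_N$ onto that subtorus has cocharacter map $\nu\mapsto(p^*(\phi(\nu)),\phi(\nu))$ with $\phi$ invertible; mere containment in the fiber would only give $\phi$ injective, which is not enough for surjectivity of the image onto $(w^T)^{-1}(0)$. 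You essentially acknowledge this in your final paragraph, where both needed facts are attributed to the fiber realization, so the repair is a reordering of the logic rather than a new idea; but as literally written the first paragraph's justification is insufficient.
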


We let $j:=\xi^T \circ i$. One way to describe $\Delta^+_\seed(\cX) $ is as the intersection of $\Delta^+_{\seed}(\cAp)$ with the weight zero slice. In light of Lemma~\ref{lem:identification_w_slice}, we can realize this cluster complex inside $\cA_\seed(\R^t)$. So we have the following definitions.

\begin{definition}
    The {\bf cluster complex} for $\cX$ {\bf inside} $\cXp(\R^T)$ is
    \[
    \widetilde{\Delta}^+_\seed(\cX):=\Delta^+_{\seed}(\cAp)\cap (w^{T})^{-1}(0).
    \]
    The {\bf cluster complex} for $\cX$ {\bf inside} $\cA(\R^t)$ is
     \[
    \Delta^+_\seed(\cX):=j^{-1}\lrp{\widetilde{\Delta}^+_\seed(\cX)}
    \]
\end{definition}

Notice that under the vector space identification of $\cA_\seed(\R^t)$ and $\cA_\seed(\R^T)$ (resp. $\cX_\seed(\R^T)$) with $N_{\R}$ (resp. $M_{\R}$), see \eqref{eq:identification}, we have that $p^T\circ i$ is identified with $ p^*$. In other words, using the notation introduced in \eqref{eq:ulnotation} we have that 
\[
\underline{(p^T\circ i)}=p^* 
\]
and the commutative diagram
\[
\xymatrix{
\cA_{\seed}(\R^t) \ar_{r}[d] \ar^{i}[r] & \cA_{\seed}(\R^T) \ar_{r}[d] \ar^{p^T}[r] & \cX_{\seed}(\R^T) \ar^{r}[d]  \\
N_{\R} \ar_{-1}[r] & N_{\R} \ar_{-p^*}[r] & M_{\mathbb R}.
}
\]

Recall from the introduction that we are considering {\bf g}-vectors as elements of $M$, so the {\bf g}-cones $\mathcal{G}_{\seed'}$ belong to $M_{\R}$.

Using the considerations made above, we are ready to describe $ \Delta^+_{\seed}(\cX)$ using the map $p$ and $\Delta^+_{\seed}(\cA)$.

\begin{lemma} \label{cone}
    Every cone of $\underline{\Delta^+_{\seed} (\cX)}$ is of the form
    \[
    (p^*)^{-1}(\mathcal{G}_{\seed'}),
    \]
    where $\mathcal{G}_{\seed'}$ is the {\bf g}-cone in $\underline{\Delta^+_{\seed} (\cA)}$ associated to $\seed'\in \mathbb T_n$. Equivalently, using the identifications $ r $ to endow $\cA_{\seed}(\R^t)$ (resp. $\cX_{\seed}(\R^T)$) with the linear structure of $N_{\R}$ (resp. $M_{\R}$), we have that every cone of $\Delta^+_{\seed} (\cX)$ is of the form
    \[
    (p^T\circ i)^{-1}(\widetilde{\mathcal{G}_{\seed'}}).
    \]
\end{lemma}
\begin{proof}
    It is enough to prove the first part of the statement. We begin by noticing that every cone of $\underline{\Delta^+_\seed(\cX)} $ can be described as the intersection of a cone of $ \underline{\Delta^+_{\seed}(\cAp)}$ with $\underline{(w^T)}^{-1}(0)$. 
    Moreover, every cone of $ \underline{\Delta^+_{\seed}(\cAp)}$ is of the form $\mathcal{G}_{\seed'} \times N_{\R} $ for some $\seed' \in \mathbb T_n$.
    Since $\underline{(w^T)}^{-1}(0)= \{ (p^*(n),n)\mid n\in N_{\R}\}$, we have that
    \[
    \mathcal{G}_{\seed'} \times N_{\R} \cap \underline{(w^T)}^{-1}(0) = \{ p^*(n),n) \mid p^*(n) \in \mathcal{G}_{\seed'}\}.
    \]
    Finally, the map $\underline{j}:N_{\R}\to M_{\R}\oplus N_{\R}$ is given by
    \[
    \underline{j}(n)=(p^*(n),n)\in M_{\R} \oplus N_{\R},
    \]
    see \cite[Lemma 3.15]{BCMNC}. The result follows.
\end{proof}

Let us now recall the tropical duality between $\cv$- and $\gv$- matrices introduced in \cite{NZ}. Let $G^{B_{\seed}}_{\seed'}$ (resp. $C^{-B_{\seed}^T}_{\seed'}$) be the $\gv$-matrix (resp. $\cv$-matrix) associated to $\seed'\in \mathbb T_n$ in the cluster pattern with initial matrix $B_{\seed}$ (resp. $-B_{\seed}^T$).
Then it was proved in \emph{loc. cit.} that $G^{B_{\seed}}_{\seed'}$ is invertible over $ \Z$ and that
\[
(G^{B_{\seed}}_{\seed'})^{-1}=(C^{-B_{\seed}^T}_{\seed'})^T.
\]
Combining Lemma~\ref{cone} and tropical duality one obtains that every $\cv$-vector ($\cv$-matrix) determines a supporting hyperplane (the system of inequalities) of a cone of $\underline{\Delta^+_{\seed}(\cX)}$.

\begin{corollary}
\label{conedescription}
Let $\seed,\seed'\in \mathbb T_n$ and consider the $\gv$-cone $\mathcal{G}_{\seed'}$ of $\underline{\Delta^+_{\seed}(\cA)}$. Then the cone $(p^*)^{-1}(\mathcal G_{\seed'})$ of $\underline{\Delta^+_{\seed}(\cX)}$ is given as
\[
(p^*)^{-1}(\mathcal G_{\seed'})=\{\beta \in N_\R \; | \; (C_{\seed'}^{-B_{\seed}^T})^TB_{\seed}\beta_{\seed} \geq \textbf{0}\}.
\]
\end{corollary}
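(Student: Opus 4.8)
The plan is to describe the simplicial cone $\mathcal{G}_{\seed'}$ explicitly by inequalities and then pull this description back along $p^*$. First I would recall that, by definition, $\mathcal{G}_{\seed'}$ is the cone in $M_\R$ spanned by the columns $g_{1;\seed'}, \dots, g_{n;\seed'}$ of $G^{B_{\seed}}_{\seed'}$. Since tropical duality guarantees that $G^{B_{\seed}}_{\seed'}$ is invertible over $\Z$, these $\gv$-vectors form a basis of $M_\R$, so $\mathcal{G}_{\seed'}$ is a full-dimensional simplicial cone. Consequently, writing a point $m\in M_\R$ in the basis determined by $\seed$ as a column vector $m_\seed$, the membership $m\in\mathcal{G}_{\seed'}$ is equivalent to requiring that the coordinates of $m$ with respect to the basis $\{g_{i;\seed'}\}$ all be non-negative, that is,
\[
m\in\mathcal{G}_{\seed'} \quad\Longleftrightarrow\quad (G^{B_{\seed}}_{\seed'})^{-1}\, m_\seed \geq \textbf{0}.
\]

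Next I would invoke tropical duality in the form $(G^{B_{\seed}}_{\seed'})^{-1}=(C^{-B_{\seed}^T}_{\seed'})^T$, recalled just above the statement, to rewrite the membership condition as
\[
m\in\mathcal{G}_{\seed'} \quad\Longleftrightarrow\quad (C^{-B_{\seed}^T}_{\seed'})^T\, m_\seed \geq \textbf{0}.
\]
Finally, by Lemma~\ref{cone} every cone of $\Delta^+_{\seed}(\cX)$ is of the form $(p^*)^{-1}(\mathcal{G}_{\seed'})$, and $\beta\in N_\R$ lies in this preimage precisely when $p^*(\beta)\in\mathcal{G}_{\seed'}$. Since the cluster ensemble lattice map satisfies $p^*(\beta)_\seed = B_{\seed}\cdot\beta_\seed$, substituting $m_\seed = B_{\seed}\,\beta_\seed$ into the displayed equivalence yields
\[
\beta\in (p^*)^{-1}(\mathcal{G}_{\seed'}) \quad\Longleftrightarrow\quad (C^{-B_{\seed}^T}_{\seed'})^T\, B_{\seed}\cdot\beta_\seed \geq \textbf{0},
\]
which is exactly the asserted description.

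The only step requiring genuine care is the first one: one must be certain that a full-dimensional simplicial cone generated by the columns of an invertible matrix $G$ equals $\{\,m : G^{-1}m\geq \textbf{0}\,\}$, with the correct orientation of the inequality. This is the standard inequality description of a simplicial cone, and it is precisely where the invertibility of $G^{B_{\seed}}_{\seed'}$ (furnished by tropical duality) is indispensable; everything after that is a formal substitution. I do not expect a serious obstacle here, since the remaining content is the bookkeeping of expressing $p^*$ in the $\seed$-basis as multiplication by $B_{\seed}$ and composing it with the inequality system defining $\mathcal{G}_{\seed'}$. Note in particular that no injectivity or surjectivity of $p^*$ is needed: the preimage description is valid whether or not $B_{\seed}$ is singular, which is consistent with the later remark that the dimension of $(p^*)^{-1}(\mathcal{G}_{\seed'})$ may depend on $\seed'$.
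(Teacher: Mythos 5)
Your proposal is correct and takes essentially the same route as the paper: both arguments express membership in $\mathcal{G}_{\seed'}$ as non-negativity of the coordinates in the basis of $\gv$-vectors, invert $G^{B_{\seed}}_{\seed'}$ via the tropical duality $(G^{B_{\seed}}_{\seed'})^{-1}=(C^{-B_{\seed}^T}_{\seed'})^T$, and substitute $p^*(\beta)_{\seed}=B_{\seed}\cdot\beta_{\seed}$. The only difference is presentational: you state the inequality description of the simplicial cone up front and check both directions, while the paper records the single (reversible) computation $p^{*}(\beta_{\seed})=G^{B_{\seed}}_{\seed'}\cdot\alpha$ with $\alpha\geq\textbf{0}$, hence $(C_{\seed'}^{-B_{\seed}^T})^T B_{\seed}\cdot\beta_{\seed}=\alpha\geq\textbf{0}$.
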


\begin{proof}
Let $\beta^T_{\seed}=(\beta_1,\dots, \beta_n)$ be such that 
\[
p^{*}(\beta_{\seed})=\sum_{i=1}^{n}\alpha_{i}\gv_{i;\seed'}, \quad \text{where } \alpha_{1}, \dots, \alpha_{n}\geq 0. 
\]
This equation can be expressed as $p^{*}(\beta_{\seed})= G^{B_{\seed}}_{\seed'} \alpha$, where $\alpha^{T}=(\alpha_{1}, \dots, \alpha_{n})$.  
Using tropical duality we obtain that 
\[
(C_{\seed'}^{-B_{\seed}^T})^T B_{\seed}\beta_{\seed}= \alpha \geq \textbf{0}.
\]    
\end{proof}
In particular, if $(c_{1;\seed'},\dots, c_{n;\seed'})$ are the columns of (\ie the $\cv$-vectors that form) the $\cv$-matrix $C_{\seed'}^{-B_{\seed}^T} $ then the inequalities defining the cone $(p^*)^{-1}(\mathcal G_{\seed'})$ are
\begin{eqnarray*}
(c_{1;\seed'}^{-B_{\seed}^T})^TB_{\seed} \beta_{\seed} \geq 0\phantom{.}\\
\vdots  \phantom{aaaaaa}\\
(c_{n;\seed'}^{-B_{\seed}^T})^T B_{\seed} \beta_{\seed} \geq 0.
\end{eqnarray*}
In other words, every $\cv$-vector $c_{j;\seed'}$ defines a supporting hyperplane of the cone $(p^*)^{-1}(\mathcal G_{\seed'})$.

\begin{remark}
\label{rem:frozen_directions}
Let $A$ (resp. $A'$) be the cluster algebra associated to a seed $\seed$ (resp. $\seed'$), where $\seed'$ is obtained from $\seed$ by adding frozen directions (that is we enlarge $I$ by letting $I\setminus \Iuf$ grow and extend the bilinear form of $\seed$ arbitrarily).  
It is a deep result (that follows for example from \cite{GHKK}) that $ \Delta(A)$ and $\Delta(A')$ are isomorphic.
However, at the level of the fan realizations there is a small change. Namely, if $\cA$ (resp. $\cA'$) is the cluster $\cA$-variety associated to $\seed$ (resp. $\seed'$) then $\underline{\Delta^+_{\seed'}(\cA')}$ is isomorphic to the product of $\underline{\Delta^+_{\seed}(\cA)}$ with a real vector space whose dimension equals the number of frozen indices that have been added.
In contrast, if we let $ \cX$ (resp. $\cX'$) be the cluster $\cX$-variety associated to $\seed$ (resp. $\seed'$) then $\underline{\Delta^+_{\seed'}(\cX')}$ in general \emph{is not} the product of $\underline{\Delta^+_{\seed}(\cX)}$ with a vector space.
Indeed, we can see from Lemma~\ref{cone} that the vector space spanned by the kernel of $B_{\seed}$ belongs to every cone of $\underline{\Delta^+_{\seed}(\cX)}$.
So if the rank of $B_{\seed'}$ is larger than the rank of $B_{\seed}$ then $\underline{\Delta^+_{\seed'}(\cX')}$ might not be isomorphic to a product of $\underline{\Delta^+_{\seed}(\cX)}$ with a vector space.
For instance, suppose that $\cX$ is associated to an orientation of Dynkin diagram of type $\mathbb A_3$. Then $\underline{\Delta_{\seed}^+(\cX)}$ has $6$ maximal cones each of which has a one dimensional linear subspace, see Example~\ref{A3} for a more precise description.
Let $\cX'$ be obtained by adding a frozen vertex so that the support of the second quiver is of type $\mathbb A_4$. 
Here the corresponding exchange matrix has full rank and $\underline{\Delta^+_{\seed'}(\cX')}$ has 9 maximal cones all of which are strictly convex.
In fact in this case we have that $\underline{\Delta^+_{\seed'}(\cX')}$ and $\underline{\Delta^+_{\seed'}(\cA')}$ are isomorphic.
\end{remark}

\begin{remark}
    In \cite{Boss}, Bossinger constructs a class of fans associated to certain partial compactifications of $\cA$. More precisely, the fans are the totally positive part of the tropicalization of the ideal defining the compactification (see \cite[Theorem 1.1]{Boss}). In the cases considered in \emph{loc. cit.}, some cones of $\underline{\Delta^+_{\seed}(\cX)}$ can be obtained as projections of cones in the totally positive part of the tropicalization. This provides a way to describe the fan $\underline{\Delta^+_{\seed}(\cX)}$ locally using Gr\"obner theory. 
\end{remark}

\subsection{Dimension and facets of the cones}
\label{dim}
It is possible to compute the dimension of the cone $(p^{*})^{-1}(\mathcal{G}_{\seed'})$ using its description by inequalities. The most interesting case is when $p^{*}$ is not an isomorphism (in case $p^*$ is an isomorphism then $(p^{*})^{-1}(\mathcal{G}_{\seed'})$ is always a top dimensional cone). 
For brevity, in this subsection we write $ C_{\seed'}^T$ as a shorthand for $(C_{\seed'}^{-B_{\seed}^T})^T$.
We consider $C_{\seed'}^TB_{\seed}\beta_{\seed} \geq \textbf{0}$ as a system of inequalities on variables $ \beta_1, \dots, \beta_n$.
Following \cite{CCZ}, we say that the inequality $c_{k;\seed'}^T B_{\seed} \beta_{\seed} \geq 0$ is an \textit{implicit equality} of the system $C_{\seed'}^T B_{\seed}\beta_{\seed} \geq \textbf{0}$ if whenever $b^T=(b_1, \dots, b_n)\in \R^n$ is such that $c_{i;\seed'}^T B_{\seed} b \geq 0$ for all $i\in \{1, \dots, n\}$ we have that $c_{k;\seed'}^T B_{\seed} b = 0$. The collection of the implicit equalities of the system is denoted as $[C_{\seed'}^T B_{\seed}]^{=}\beta_{\seed} \geq \textbf{0}^{=}$ and the collection of the remaining inequalities as $[C_{\seed'}^TB_{\seed}]^{>}\beta_{\seed} \geq \textbf{0}^{>}$.
In particular, we can think of both $[C_{\seed'}^T B_{\seed}]^{=}\beta_{\seed} \geq \textbf{0}^{=}$ and $[C_{\seed'}^TB_{\seed}]^{>}\beta_{\seed} \geq \textbf{0}^{>}$ as systems of inequalities (with potentially fewer inequalities than the system $C_{\seed'}^TB_{\seed} \beta_{\seed} \geq \textbf{0}$).

Since $(p^*)^{-1}(\mathcal G_{\seed'})\neq \emptyset$ then \cite[Theorem 3.17]{CCZ} tells us that
\begin{equation}
\label{eq:dim}
\dim((p^*)^{-1}(\mathcal G_{\seed'}))+\rank([C_{\seed'}^TB_{\seed}]^=)=n,
\end{equation}
where $[C_{\seed'}^TB_\seed]^=$ represents the matrix associated with the system of implicit equalities $[C_{\seed'}^TB_\seed]^{=}\beta_{\seed} \geq \textbf{0}^{=}$. 
More precisely, if we let $J^{=}\subseteq \{1,\dots,n\}$ be the set of indices corresponding to the implicit equalities in the system $[C_{\seed'}^TB_\seed]^{=}\beta_{\seed} \geq \textbf{0}^{=}$, then the rank of the matrix with rows $\{c_{j;\seed'}^T B_\seed\}_{j\in J^{=}}$ allows us to determine the dimension of $(p^{*})^{-1}(\mathcal G_{\seed'})$. In particular, if $C_{\seed'}^TB_\seed  \beta_\seed \geq \textbf{0}$ has no implicit equalities, except possibly for the trivial equality 0 = 0, then $(p^{*})^{-1}(\mathcal G_{\seed'})$ has full dimension $n$.

To characterize the implicit equalities of the system $C_{\seed'}^T B_{\seed}\beta_{\seed} \geq \textbf{0}$ notice that if there is a subset $\{c_{h;\seed'}^{T} B_{\seed} \beta_{\seed}  \geq 0\}_{h\in H}$ of inequalities in the system parametrized by some nonempty set $H\subseteq \{1,\dots,n\}$ and there are positive integers $\lambda_h> 0$, for all $h\in H$, such that $\displaystyle{\sum_{h\in H}\lambda_h (c_{h;\seed'}^T B_{\seed}  \beta_{\seed})=0}$ then
\[
-c_{k;\seed'}^T  B_{\seed}\beta_{\seed} = \frac{1}{\lambda_k} \sum_{\substack{h\in H \\ h\neq k}}\lambda_h (c_{h;\seed'}^T  B_{\seed} \beta_{\seed})\geq 0,
\] 
for all $k\in H$. This implies that $c_{k;\seed'}^T  B_{\seed} b=0$ for every solution $b\in \R^n$ of the system $C_{\seed'}^TB_{\seed} \beta_{\seed} \geq \textbf{0}$. So in this situation we conclude that $c_{k;\seed'}^T  B_{\seed} \beta_\seed \geq  0$ is an implicit equality of the system.

In particular, a set of inequalities $\{c_{h;\seed'}^{T}  B_{\seed} \beta_{\seed}  \geq 0\}_{h\in H}$ is contained in the set of implicit equalities if and only if there are positive integers  $\{\lambda_h\}_{h\in H}$ such that $\sum_{h\in H}\lambda_hc_{h;\seed'} \in \ker(p^{*})$.
In the skew-symmetric acyclic case inequalities belonging to the set of implicit equalities can be determined by finding positive linear combinations of $\gv$-vectors, see Remark~\ref{rem:relationgvectors}.  
In particular, the observations made above can be used to identify the implicit equalities of the system $C_{\seed'}^T B_{\seed} \beta_{\seed} \geq \textbf{0}$.

We now give a bound on the number of facets of $(p^*)^{-1}(\mathcal G_{\seed'})$. For this, we let $J^{>}$ be the set of indices corresponding to the inequalities in the system $[C_{\seed'}^TB_{\seed}]^{>}\beta_{\seed} \geq \textbf{0}^{>}$. Then 
\[
(p^*)^{-1}(\mathcal G_{\seed'})=\left\{\beta\in N_\R \; | \; c_{i;\seed'}^{T} B_{\seed} \beta_{\seed} = 0, i\in J^{=}, \; c_{j;\seed'}^{T}  B_{\seed} \beta_{\seed} \geq 0, j\in J^{>}\right\}.
\]
We say that the system $[C_{\seed'}^TB_{\seed}]^{>}\beta_{\seed} \geq \textbf{0}^{>}$ is \emph{redundant} if there is a $k\in J^{>}$ such that
\[
(p^*)^{-1}(\mathcal G_{\seed'})=\left\{\beta\in N_\R \; |  c_{i;\seed'}^{T}  B_{\seed} \beta_{\seed} = 0, i\in J^{=}\text{ and }c_{j;\seed'}^{T} B_{\seed} \beta_{\seed} \geq 0, j\in J^{>}\setminus \{k\}\right\}.
\]
In this case we say that the linear form $c_{k;\seed'}^{T}  B_{\seed} \beta_{\seed}$ is superfluous for the system.
Observe, moreover, that if there exist $\lambda_1, \dots ,\lambda_p> 0$ such that $c_{k;\seed'}^{T}  B_{\seed} \beta_{\seed}=\sum_{\substack{i=1\\i\neq k}}^{p} \lambda_{i} c_{i;\seed'}^{T}  B_{\seed} \beta_{\seed}$, for $k\in J^{>}$, then the inequality $c_{k;\seed'}^{T}  B_{\seed} \beta_{\seed}$ is superfluous.
Then, following \cite[Theorem 3.27]{CCZ} for each facet $F$ of $(p^{*})^{-1}(\mathcal G_{\seed'})$, there exists $j \in J^{>}$ such that the inequality $c_{j;\seed'}^{T}  B_{\seed} \beta_{\seed} \geq 0$ defines $F$, that is, 
\[F := (p^{*})^{-1}(\mathcal G_{\seed'}) \cap \{\beta \in N_{\R} \mid c_{j;\seed'}^{T}  B_{\seed}  \beta_{\seed} = 0\}.\]
This implies that the number of facets of $(p^{*})^{-1}(\mathcal G_{\seed'})$ is less than or equal to $|J^{>}|=n-|J^{=}|$ and that the equality is achieved only when the system $[C_{\seed'}^T B_{\seed}]^{>} \beta_{\seed} \geq \textbf{0}^{>}$ is not redundant. 

\subsection{The skew-symmetrizable case}
\label{skew-symmetrizable}

For simplicity we have considered the skew-symmetric case, that is, we have assumed that the codomain of the bilinear form $\{\cdot , \cdot \}$ is $\Z$.
However, the results of \S~\ref{sec:inequalities} and \S~\ref{dim} still hold (upon a suitable reinterpretation of some constructions) in the skew-symmetrizable case as we proceed to explain.

Suppose we are in the skew-symmetrizable case. 
This means that the codomain of the skew-symmetric bilinear form $\{\cdot, \cdot \}$ is $\Q$ and that there is a finite index sublattice $N^{\circ}\subset N $ such that $ \{ N, N^\circ \}\subset \Z$. Let $M^{\circ}\supset M$ be the $\Z$-dual of $N$. In this case the seed $\seed=(e_i)_{i\in I}$ is such that $\{e_i\}_{i\in I}$ is a basis of $N$ and $\{D_ie_i\}_{i\in I}$ is a basis of $N^{\circ}$ for positive integers $D_1, \dots, D_n\in \Z_{>0}$. 
Moreover, $\{e_i^*\}_{i\in I}$ is the basis for $M$ dual to $\{e_i\}_{i\in I}$ and $\{f_i\}_{i\in I}$ is a basis of $M^\circ$, where $f_i=D_i^{-1}e_i^*$.
Let $D=\text{lcm}(D_i\mid i\in I)$.
In this case the $ij$ entry of the matrix $B_{\seed}$ is $\{e_j,e_i\}D_i$, the corresponding cluster ensemble lattice map is of the form $p^*: N \to  M^{\circ}$ and its matrix in the bases of $N$ and $M^{\circ}$ described above is still $B_{\seed}$.

In this case the $\gv$-vectors belong to $M^\circ$ and the $\cv$-vectors still belong to $N$, see Remark~\ref{d_factor}. 
The ambient vector space for $\underline{\Delta^+_{\seed}(\cX)}$ is $N_{\R}$ and for $\underline{\Delta^+_{\seed}(\cA)}$ is $M^{\circ}_{\R}$.
The first assertion of Lemma~\ref{cone} still holds as stated in the skew-symmetrizable case and its proof is essentially the same. 
Indeed, the main difference in the proof is that one would need to discuss the weight map in the skew-symmetrizable case; the reader can find the relevant discussion in Proposition 3.14 of \cite{BCMNC}. 
Moreover, the corresponding weight $0$ slice is a subset of $M^\circ_{\R} \oplus N_{\R}$ and, just as in the skew-symmetric case, it coincides with the image of the inclusion $N_{\R}\to (M^\circ \oplus N)_{\R} $ given by $n \mapsto (p^*(n),n) $, see \S3.2.6 of \cite{BCMNC} for the explanation. So under these considerations, the scheme of the proof remains the same.

However, in order to state in the skew-symmetrizable case the second assertion of Lemma~\ref{cone} one needs to talk about \emph{Fock-Goncharov duality} for cluster varieties.
We proceed to elaborate on this and refer to \S3.1 of \cite{BCMNC} for the details we shall omit.
Denote by $\Gamma$ the fixed data giving rise to $\cA$ and $\cX$, and in particular to the cluster ensemble map $p:\cA \to \cX$. We can consider the \emph{Langlands dual} fixed data $\Gamma^\vee$. The Fock--Goncharov dual of $\cA$ (resp. $\cX$) is the cluster $\cX$-variety (resp. $\cA$-variety) associated to $\Gamma^\vee$, which we denote by $\cA^\vee$ (resp. $\cX^\vee$).
The seed tori for $\cA^\vee$ (resp. $\cX^\vee$) are of the form $T_{M^\circ}$ (resp. $T_{D\cdot N}$).
The corresponding cluster ensemble map is $p^\vee: \cX^\vee \to \cA^\vee$ and satisfies that $\underline{((p^\vee)^T\circ i)}=p^*$. In other words, under the canonical isomorphism of $(DN)_{\R} \cong N_\R$, we have that
\[
\underline{((p^\vee)^T\circ i)}: N_{\R} \to M^\circ_{\R} \quad \text{is given by} \quad n \mapsto p^*(n).
\]
Next, for every $\seed\in \T_n $ for $\Gamma $ we have that
\[
\Delta^+_{\seed}(\cA) \subset \cA^{\vee}_{\seed}(\R^T) \quad \quad \text{and} \quad \quad \Delta^+_{\seed}(\cX) \subset \cX^\vee_{\seed}(\R^t).
\]
In light of the above considerations we have that in order to state the second assertion of Lemma~\ref{cone} in the skew-symmetrizable case it is enough to replace 
$p^T\circ i $ by $(p^\vee)^{\T}\circ i$. Observe that in the skew-symmetric case we have that $p^\vee=p$.

Finally, Corollary~\ref{conedescription} still holds as stated in the skew-symmetrizable case. Indeed, this result follows directly from Lemma~\ref{cone} and tropical duality. The way we have presented tropical duality includes the skew-symmetrizable case and does not need to be modified at all.

\begin{remark}
\label{d_factor}
In some constructions the lattice $D\cdot N$ arises more naturally as the ambient lattice for the {\bf c}-vectors, see for example \S3.2.6 of \cite{BCMNC}. However, we can systematically consider the canonical isomorphism between $ D\cdot N$ and $N$ to eliminate the factor of $D$.
\end{remark}

\section{Global monomials on cluster Poisson varieties} \label{sec:theta_functions}

A \emph{global monomial} on a cluster variety $\cV$ is a regular function on $\cV$ which restricts to a character of some seed torus in the atlas of $\cV$ (see \cite[Definition 0.1]{GHKK}).
The integral points of a cone of the cluster complex $\Delta^+_{\seed}(\cV)$ parametrize those global monomials on $\cV$ that restrict to a character of the same seed torus and, moreover, every such global monomial is a \emph{theta function} on $\cV$.
More precisely, if $v_1, \dots, v_r$ are primitive ray generators of a cone of $\Delta^+_{\seed}(\cV) $ then there are theta functions $\tf^{\cV}_{v_1}, \dots, \tf^{\cV}_{v_r}$ on $\cV$ and a seed $\seed'$ such that every $\tf^{\cV}_{v_j}$ restricts to a character of $\cV_{\seed'}$. Moreover, given non-negative integers $a_1,\dots , a_r$ and $v=\sum_{j=1}^ra_jv_j$ then the theta function $\tf^{\cV}_{v}$ on $\cV$ is such that
\[
\tf^{\cV}_{v}= \prod_{j=1}^r (\tf^{\cV}_{v_j})^{a_j}.
\]

By \cite[Lemma 7.8]{GHKK} a global monomial on $\cA$ is the same as a cluster monomial. 
Every global monomial on $\cX$ can be pulled-back to $\cAp$ along the map $\tilde{p}:\cAp \to \cX$ obtained composing the quotient map $\cAp \to  \cAp/T_N$ with the canonical isomorphism $\cAp /T_N \to \cX$ (see \cite[Lemma 7.10 (3)]{GHKK} for a proof a this statement and \cite[\S2.1.3.]{BCMNC} for a precise description of the maps).
This allows to describe the global monomials on $\cX$ using the \emph{$F$-polynomials} introduced in \cite[Definition 3.3]{FZ_IV} (see Lemma~\ref{initiallemma} bellow). 

Let $y_1,\dots, y_n$ be the cluster coordinates of the seed torus  $\cX_{\seed}$ and $x_1,\dots , x_n,t_1,\dots, t_n$ be the coordinated of the seed torus $(\cAp)_{\seed}=T_M\times T_N$. We use the standard vector notation for a monomial on variables $y_1,\dots, y_n$. Namely, if $v=(v_1,\dots, v_n)\in \Z^n$ then ${\bf y}^v=y_1^{v_1}\cdots y_n^{v_n}$ and similar notation is used for the $x_i's$ and the $t_i's$. 
The $F$-polynomial associated to $B_{\seed}$, $\seed'$ and $i$ is denoted by $F_{i;\seed'}$.

\begin{lemma} \label{initiallemma}
Let $\beta^T=(\beta_1,\dots, \beta_n)\in N$ be such that $p^{*}(\beta)\in \mathcal G_{\seed'}$ for some ${\seed'}\in \mathbb{T}_n$.  Let 
\[
p^{*}(\beta)=\sum_{i=1}^{n}\alpha_{i}\gv_{i;\seed'} \quad \text{where } \alpha_{1}, \dots, \alpha_{n}\geq 0. 
\]
Then
\begin{equation}
\label{eq:gm1}
\vartheta_{\beta}^{\mathcal{X}}:={\bf y}^{\beta_{\seed}}\prod_{i=1}^{n} F_{i;\seed'}(y_1, \ldots, y_n)^{\alpha_{i}}.
\end{equation}
In particular,
\begin{equation}
\label{eq:gm2}
\tf^{\cX}_{\beta}={\bf y}^{\beta_{\seed}}\prod_{i=1}^n F_{i;\seed'}(y_1,\dots,y_n)^{c^T_{i;\seed'}  B_{\seed} \beta_{\seed}}.
\end{equation}
\end{lemma}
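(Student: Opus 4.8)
The plan is to lift the claimed identity to $\cAp$ along the quotient map $\tilde{p}:\cAp \to \cX$, where it becomes the classical expansion of a cluster monomial with principal coefficients in terms of its $F$-polynomial, and then to descend. I would first isolate the two ingredients. On the one hand, as recalled at the beginning of this section, every global monomial on $\cX$ corresponds, via pullback along $\tilde{p}$, to a $T_N$-weight zero global monomial on $\cAp$, and in the chosen charts the quotient map reads $\tilde{p}^*(y_j)=\hat{y}_j:=t_j\prod_{k}x_k^{b_{kj}}$ (see \cite[\S2.1.3]{BCMNC}). On the other hand, the Fomin--Zelevinsky separation formula \cite{FZ_IV} expresses the cluster variable $X_{i;\seed'}$ of $\cAp$ as
\[
X_{i;\seed'}={\bf x}^{g_{i;\seed'}}\,F_{i;\seed'}(\hat{y}_1,\dots,\hat{y}_n),
\]
the $F$-polynomial $F_{i;\seed'}$ being recovered by the specialization ${\bf x}=1$, which returns $\hat{y}_j=y_j$.

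Next I would simply run the computation. Setting $\phi:={\bf y}^{\beta_{\seed}}\prod_{i}F_{i;\seed'}^{\alpha_i}(y_1,\dots,y_n)$ and using $\tilde{p}^*(y_j)=\hat{y}_j$, I would invoke the elementary identity $\prod_j\hat{y}_j^{\beta_j}={\bf t}^{\beta_{\seed}}{\bf x}^{B_{\seed}\beta_{\seed}}={\bf t}^{\beta_{\seed}}{\bf x}^{p^*(\beta)}$ together with the separation formula in the form $F_{i;\seed'}(\hat{y})=X_{i;\seed'}{\bf x}^{-g_{i;\seed'}}$ and the hypothesis $\sum_i\alpha_i g_{i;\seed'}=p^*(\beta)$. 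The two powers of ${\bf x}$ then cancel and I obtain
\[
\tilde{p}^*\phi={\bf t}^{\beta_{\seed}}\prod_{i}X_{i;\seed'}^{\alpha_i}.
\]
Since $\alpha_i\ge 0$, the right-hand side is a genuine cluster monomial of $\cAp$, hence a global monomial; a degree count in the Fomin--Zelevinsky $\Z^n$-grading (equivalently, tropicalizing the weight map as in \cite[Proposition 3.14]{BCMNC}) shows its $T_N$-weight is $\sum_i\alpha_i g_{i;\seed'}-p^*(\beta)=0$, so that it indeed descends.

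It then remains to identify $\phi$ with the theta function. As $\tilde{p}^*\phi$ is a weight-zero global monomial on $\cAp$, the function $\phi$ is a global monomial, hence a theta function, on $\cX$; its $\seed$-chart expansion is ${\bf y}^{\beta_{\seed}}\prod_i F_{i;\seed'}^{\alpha_i}$, and because each $F_{i;\seed'}$ has constant term $1$ its unique lowest-order term is ${\bf y}^{\beta_{\seed}}$, so the $\seed$-tropical point of $\phi$ is $\beta$ and $\phi=\tf_\beta^\cX$. This is \eqref{eq:gm1}. Finally, \eqref{eq:gm2} is immediate: by the tropical duality computation already carried out in the proof of Corollary~\ref{conedescription} one has $\alpha=(C^{-B_{\seed}^T}_{\seed'})^T B_{\seed}\beta_{\seed}$, whence $\alpha_i=c^T_{i;\seed'}\cdot B_{\seed}\cdot \beta_{\seed}$, and substituting into \eqref{eq:gm1} yields the stated formula.

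I expect the only delicate point to be the bookkeeping of conventions rather than any genuine difficulty. One must ensure that the direction of the quotient map $\tilde{p}^*(y_j)=\hat{y}_j$, the sign conventions in the separation formula, and the normalization of the parametrization $\beta\mapsto \tf_\beta^\cX$ are mutually consistent, so that the powers of ${\bf x}$ cancel exactly and the descended function is the theta function indexed by $\beta$ itself and not by a sign- or $g$-vector-twisted variant. Once these are fixed, the heart of the argument is the one-line cancellation above.
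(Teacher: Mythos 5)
Your argument is correct and its computational engine is identical to the paper's: both proofs pass to $\cAp$ via $\tilde p^*(y_j)=\hat y_j$, invoke the separation formula, and use $\sum_i\alpha_i\gv_{i;\seed'}=p^*(\beta)$ to cancel the powers of ${\bf x}$, and both deduce \eqref{eq:gm2} from \eqref{eq:gm1} by the same tropical-duality computation as in Corollary~\ref{conedescription}. The difference is the direction of the logic and the fact used to pin down the index $\beta$. The paper starts from $\tf^{\cX}_\beta$ and uses two facts about theta functions: the pullback compatibility $\tilde p^*(\tf^{\cX}_\beta)=\tf^{\cAp}_{(p^*(\beta),\beta)}$ and multiplicativity within a cone, giving $\tf^{\cAp}_{(p^*(\beta),\beta)}=\tf^{\cAp}_{(0,\beta)}\,\tf^{\cAp}_{(p^*(\beta),0)}={\bf t}^{\beta_\seed}\prod_i\bigl(\tf^{\cAp}_{(\gv_{i;\seed'},0)}\bigr)^{\alpha_i}$; after the separation formula, \eqref{eq:gm1} follows from injectivity of $\tilde p^*$, with the index known a priori. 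You instead start from the candidate function $\phi$, show $\tilde p^*\phi={\bf t}^{\beta_\seed}\prod_i X_{i;\seed'}^{\alpha_i}$, descend it to a global monomial on $\cX$, and identify the index a posteriori from the lowest-order term. This makes your proof independent of the parametrization compatibility taken from \cite{BCMNC}, but it relies on one step you assert without justification: that a theta function on $\cX$ whose expansion in the chart $\cX_\seed$ has unique lowest-order term ${\bf y}^{\beta_\seed}$ must be $\tf^{\cX}_\beta$. This is true --- every $\tf^{\cX}_\gamma$ expands in the initial chart as ${\bf y}^{\gamma_\seed}$ times a series in monomials ${\bf y}^{v}$ with $v\geq 0$ and constant term $1$, so lowest-order exponents are well defined and distinguish indices --- but it is precisely the kind of structural input about theta functions that the paper avoids by citing \cite{BCMNC}; you should either cite this property of broken-line expansions explicitly or substitute the paper's anchor identity, after which your proof and the paper's coincide up to reading order.
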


\begin{proof}
The pull-back $\tilde{p}^*: N \to M\oplus N$ is given by $\beta \mapsto (p^*(\beta),\beta)$. 
Using the fact that $\tf^{\cAp}_{(p^*(\beta),0)}$ is the global monomial parametrized by $ \sum_{i=1}^{n}\alpha_{i}\gv_{i;\seed'}$ we have that
\begin{equation}
\label{eq:tf1}
\tilde{p}^*(\tf^\cX_\beta)=\tf^{\cAp}_{(p^*(\beta),\beta)}=\tf^{\cAp}_{(0,\beta)}\tf^{\cAp}_{(p^*(\beta),0)}={\bf t}^{\beta_{\seed}}\prod_{i=1}^n\left(\tf^{\cAp}_{(\gv_{i;\seed'},0)}\right)^{\alpha_{i}}.
\end{equation}
The theta function $\tf^{\cAp}_{(\gv_{i;\seed'},0)}$ is in fact a cluster variable so it is given as
\[
\tf^{\cAp}_{(\gv_{i;\seed'},0)}={\bf x}^{\gv_{i;\seed'}}F_{i;\seed'}(\hat{y}_1, \dots, \hat{y}_n)
\]
where 
\begin{equation}
\label{eq:tf2}
\hat{y}_j=t_j\prod_{i=1}^n x_i^{b_{ij}}=\tf^{\cAp}_{(0,e_j)}\tf^{\cAp}_{(p^*(e_j),0)}= \tf^{\cAp}_{(p^*(e_j),e_j)}= \tilde{p}^*(y_j),
\end{equation}
for $j \in \{1,\dots ,n\}$ is the weight 0 monomial on $(\cAp)_{\seed}$ inducing $y_j$.
More precisely, on the one hand $\hat{y}_j $ is a global function on the torus $(\cAp)_{\seed}$ whose weight is $0$ under the action of $T_N$ on $\cAp$, on the other hand the isomorphism $\cAp/T_N \to \cX$ restricts to an isomorphism of torus $(\cAp)_{\seed}/T_N\overset{\sim}{\to} \cX_\seed$. Since $\hat{y}_j $ has $T_N$-weight $0 $ it induces a function on $(\cAp)_{\seed}/T_N$, such induced function corresponds to $y_j$ under the isomorphism $(\cAp)_{\seed}/T_N\overset{\sim}{\to}  \cX_{\seed}$.
Putting together equations \eqref{eq:tf1} and \eqref{eq:tf2} we obtain
\begin{eqnarray*}
\tilde{p}^*(\tf^\cX_\beta)&=&{\bf t}^{\beta_{\seed}}\prod_{i=1}^{n}({\bf x}^{\alpha_{i}\gv_{i;\seed'}})\prod_{i=1}^{n}F_{i;\seed'}(\tilde{p}^*(y_1),\dots, \tilde{p}^*(y_n))^{\alpha_i}\\
&=&{\bf t}^{\beta_\seed}{\bf x}^{p^*(\beta_\seed)}\prod_{i=1}^{n}F_{i;\seed'}(\tilde{p}^*(y_1),\dots, \tilde{p}^*(y_n))^{\alpha_i}\\
&=&\tilde{p}^*({\bf y}^{\beta_\seed})\prod_{i=1}^{n}\tilde{p}^*(F_{i;\seed'}(y_1,\dots, y_n)^{\alpha_i})\\
&=&\tilde{p}^*\left({\bf y}^{\beta_\seed}\prod_{i=1}^{n} F_{i;\seed'}(y_1, \ldots, y_n)^{\alpha_{i}}\right).
\end{eqnarray*}
Equation \eqref{eq:gm1} follows from the injectivity of $\tilde{p}^* $ and \eqref{eq:gm2} from the fact that $ \alpha_{i}=c^T_{i;\seed'} B_\seed   \beta_\seed$.
\end{proof}

\begin{example}
Let $B_{\seed}=\begin{psmallmatrix}
0 & 1 & 0 \\
-1 & 0 & -1 \\
0 & 1 & 0 
\end{psmallmatrix}$ and $\beta^{T}_\seed=(\beta_1,\beta_2,\beta_3)$ be such that $p^{*}(\beta_{\seed})\in \mathcal G_{\seed'}$, where $G^B_{\seed'}=\begin{psmallmatrix}
-1 & 0 & 0 \\
1 & 1 & 1 \\
0 & 0 & 1 
\end{psmallmatrix}$ is its associated $\gv$-matrix. Hence, 

\begin{align*}
\vartheta_{\beta_{\seed}}^{\mathcal{X}}&=y_1^{\beta_1} y_2^{\beta_2} y_3^{\beta_3}(1+y_1)^{(-1,0,0) B_{\seed} \beta_{\seed}}1^{(1,1,1)  B_{\seed} \beta_{\seed}}(1+y_3)^{(0,0,-1)  B_{\seed} \beta_{\seed}}  \\
&= y_1^{\beta_1} y_2^{\beta_2} y_3^{\beta_3}(1+y_1)^{(0,-1,0) \beta_{\seed}}(1+y_3)^{(0,-1,0) \beta_{\seed}}\\
&= y_1^{\beta_1} y_2^{\beta_2} y_3^{\beta_3}(1+y_1)^{-\beta_2}(1+y_3)^{-\beta_2},
\end{align*}
where $-\beta_2\geq 0$ and $-\beta_1+2\beta_2-\beta_3\geq 0$ are the inequalities in the system $C_{\seed'}^TB_\seed  \beta_{\seed} \geq 0$, $C_{\seed'}=\begin{psmallmatrix}
-1 & 1 & 0 \\
0 & 1 & 0 \\
0 & 1 & -1 
\end{psmallmatrix}$.
\end{example}

\begin{remark}
\label{rem:tf_sk}
    Lemma~\ref{initiallemma} also holds as stated in the skew-symmetrizable case (see Remark~\ref{d_factor}). The only difference in the preamble is that the torus acting on $\cAp$ is $T_{N^\circ}$ as opposed to $T_N$.
\end{remark}

\section{Normal vectors via {\bf g}-vectors in the acyclic case} \label{sec:supporting_hyperplanes}
\subsection{The cluster ensemble map via representation theory}
From now on we assume that $Q=Q_\seed$ is acyclic and denote $B_\seed$ also by $B_Q$.
In this section we describe the map $p^*$ using representation theory and use this to provide a formula (see Theorem~\ref{dv-gv} below) that allows to compute the image under $p^*$ of a dimension vector using Auslander-Reiten theory.
We begin by recalling the notions of dimension vectors for objects of $D^b_{kQ}$ and {\bf g}-vectors for objects of $ \KKb(\proj kQ)$.
The reader is referred to \cite{AIR} and \cite{DIJ} for a complete treatment on {\bf g}-vectors using silting theory. 

\subsubsection{Dimension vectors and {\bf g}-vectors} 
All $kQ$-modules are thought of as stalk complexes concentrated in degree $0$.
Let $ S_1, \dots , S_n$ be the simple $kQ$-modules and $P_1, \dots, P_n$ be the indecomposable projective $kQ$-modules.
The classes of the simple modules form a basis of $K_0(D^b_{kQ})$ and the classes of the indecomposable projective modules form a basis of $K_0(\KKb(\proj kQ))$.

We shall use the following conventions. The dimension vector of an object $X$ of $ D^b_{kQ}$ is
\[
\udim \ X = \sum_{i=1}^n d_i e_i\in N,
\]
 where
\[
[X]=\sum_{i=1}^n d_i [S_i].
\]
Let $P=P_1\oplus \dots \oplus P_n $.
The {\bf g}-vector with respect to $P$ of an object $X$ of $ \KKb(\proj kQ)$ is 
\begin{equation*}
    \gv_P^X:= \sum_{i=1}^n g_i e^*_i \in  M, 
\end{equation*}
where
\[
[X]= \sum_{i=1}^n g_i[P_i] \in K_0(\KKb(\proj kQ)). 
\]
Similarly, let $I_1, \dots , I_n$ be the indecomposable injective $kQ$-modules and let $K^b(\inj kQ)$ be the homotopy category of bounded complexes of finitely generated injective $kQ$-modules. The classes of the indecomposable injective modules form a basis of the Grothendieck ring $K_0(\inj kQ)$. In particular we can define
\begin{equation*}
    \gv_I^X:= \sum_{i=1}^n g_i e^*_i \in  M, 
\end{equation*}
where
\[
[X]= \sum_{i=1}^n g_i[I_i] \in K_0(\KKb(\inj kQ)). 
\]

\begin{remark}
    It is well known that if $X\in \KKb(\proj kQ)$ is the projective presentation of an indecomposable rigid module then $\gv_P^X$ corresponds to the {\bf g}-vector of a cluster variable. 
 \end{remark}

\begin{remark}
\label{rem:additivity}
  Since $\text{mod } kQ $ is hereditary the {\bf g}-vectors are additive in exact triangles. 
  In other words, if $X \to E \to Y \to X[1]$ is an exact triangle in $\KKb(\proj kQ)$ (resp. in $\KKb(\inj kQ)$) then $\gv_P^{E}=\gv_P^{X} + \gv_P^{Y} $ (resp. $\gv_I^{E}=\gv_I^{X} + \gv_I^{Y} $).
\end{remark}

\subsubsection{Interpretation of $p^*$ via quiver representations}
Let $C_{kQ}$ be the Cartan matrix associated to $kQ$.
This is a $n\times n$ integer matrix whose columns are the dimension vectors of the indecomposable projective $kQ$-modules.
Since $Q$ is acyclic, the \emph{Euler characteristic} of $kQ$ is the bilinear map $ \langle -, - \rangle_{kQ}: N\times N \to \Z$ given by 
\[
\langle \udim \ X, \udim \ Y \rangle_{kQ} = \dim_k \Hom_{kQ} (X,Y)- \dim_k \Ext^1_{kQ} (X,Y).
\]
Moreover, in the basis of $N$ given by $ \seed$ (\ie by the dimension vectors of the simple modules) the matrix associated to $\langle -, - \rangle_{kQ}$ is $(C^{-1}_{kQ})^T$.

The following is a well known result that asserts that the exchange matrix $B_Q$ can be described using the Cartan matrix. For the convenience of the reader we include a proof. To keep notation light we denote $ (\udim\ X)_{\seed}$ simply by $\udim\ X$.

\begin{lemma}\label{B-C} 
Let $Q$ be an acyclic quiver. Then \[B_Q=(C_{kQ}^{-1})^T-C_{kQ}^{-1}.\]
\end{lemma}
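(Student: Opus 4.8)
The plan is to exploit the fact, recalled just before the statement, that in the basis of $N$ given by the simple modules $[S_1],\dots,[S_n]$ the Euler form $\langle-,-\rangle_{kQ}$ is represented by the matrix $(C_{kQ}^{-1})^T$. Writing $E:=(C_{kQ}^{-1})^T$, so that $E_{ij}=\langle [S_i],[S_j]\rangle_{kQ}$ and $E^T=C_{kQ}^{-1}$, the identity to be proved reads $B_Q=E-E^T$; that is, $B_Q$ is precisely the skew-symmetrization of the Euler form. Both sides are manifestly skew-symmetric, so it suffices to compare $(i,j)$ entries, and only the off-diagonal ones, since the diagonal vanishes on each side (acyclicity forbids loops, so there is nothing to check when $i=j$).

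First I would evaluate the Euler pairing on simples. Because $k$ is algebraically closed and the $S_i$ are pairwise non-isomorphic simple modules, $\dim_k\Hom_{kQ}(S_i,S_j)=\delta_{ij}$; and because $kQ$ is hereditary there are no higher extension groups, whence
\[
E_{ij}=\langle [S_i],[S_j]\rangle_{kQ}=\delta_{ij}-\dim_k\Ext^1_{kQ}(S_i,S_j).
\]
The key representation-theoretic input is the standard description of extensions between simples over a path algebra: $\dim_k\Ext^1_{kQ}(S_i,S_j)$ equals the number of arrows from $j$ to $i$ in $Q$ (with the module conventions in force here). Substituting this, the off-diagonal entries of $E-E^T$ become $(E-E^T)_{ij}=\#\{j\to i\}^{\,\text{difference}}$, more precisely $\#\{i\to j\}-\#\{j\to i\}$, which is exactly $b_{ij}=\{e_j,e_i\}$ by the rule relating $B_Q$ to $Q$ (recall $b_{ij}>0$ exactly when the arrows run $i\to j$, and acyclicity rules out $2$-cycles, so at most one of the two counts is nonzero). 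Hence $E-E^T=B_Q$, i.e. $(C_{kQ}^{-1})^T-C_{kQ}^{-1}=B_Q$.

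The one delicate point, and the place I expect to spend the most care, is the bookkeeping of orientation conventions. Three sign conventions have to be threaded together consistently: the direction of the arrows counted by $\Ext^1(S_i,S_j)$ (which depends on the left/right-module and path-composition conventions), the convention $b_{ij}=\{e_j,e_i\}$ together with the rule tying the sign of $b_{ij}$ to arrow direction, and the transpose appearing in the Euler matrix $(C_{kQ}^{-1})^T$. A mismatch in any single one of these flips the global sign and would produce $-B_Q$ instead of $B_Q$. To pin the conventions down unambiguously I would test the formula on the quiver $1\to 2$ (and, as a cross-check, on $1\rightrightarrows 2$), computing $C_{kQ}$ directly from the indecomposable projectives, forming $(C_{kQ}^{-1})^T-C_{kQ}^{-1}$ by hand, and confirming it reproduces $B_Q$ rather than its negative; this also fixes the correct arrow direction in the $\Ext^1$ computation used above. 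An equivalent route, avoiding the Euler-form input, is to order the vertices topologically so the adjacency matrix $A$ (with $A_{ij}=\#\{i\to j\}$) is strictly upper triangular, identify $C_{kQ}^{-1}=I-A$ via path counting, and read off $(C_{kQ}^{-1})^T-C_{kQ}^{-1}=A-A^T=B_Q$; I would keep this as a sanity check on the signs.
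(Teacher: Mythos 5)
Your proof is correct and takes essentially the same route as the paper's: both compute the $(i,j)$ entry of $(C_{kQ}^{-1})^T-C_{kQ}^{-1}$ as the skew-symmetrization of the Euler form evaluated on the simple modules, and then invoke the standard fact that $\dim_k\Ext^1_{kQ}(S_i,S_j)$ equals the number of arrows from $j$ to $i$. The only cosmetic difference is that the paper rewrites the transposed term $C_{kQ}^{-1}=(C_{kQ^{\mathrm{op}}}^{-1})^T$ as the Euler form of the opposite quiver, while you handle it directly as $E^T$; your small-quiver sanity checks are a reasonable way to pin down the orientation conventions, though the arrow-counting lemma already fixes them.
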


\begin{proof}
Let $c_{ij}$ be the entry in the $i^{\rm th}$ row and the $j^{\rm th}$ column of the matrix $(C_{kQ}^{-1})^T-C_{kQ}^{-1}$, then
\begin{align*}
c_{ij}= &  \ (\udim \; S_i )  (C_{kQ}^{-1})^T  (\udim \; S_j )^T - (\udim \; S_i)   (C^{-1}_{kQ}) (\udim \; S_j )^T \\
= & \ (\udim \; S_i) (C_{kQ}^{-1})^T (\udim \; S_j )^T - (\udim \; S_i) (C^{-1}_{kQ^{op}})^T (\udim \; S_j )^T\\
= &\ \langle \udim \ S_i, \udim  \ S_j \rangle_{kQ}- \langle \udim \ S_i, \udim  \ S_j \rangle_{kQ^{\rm op}}\\
= &\ \dim_k  \Ext^1_{kQ^{\rm op}}(S_i, S_j) -\dim_k  \Ext^1_{kQ}(S_i, S_j).
\end{align*}
Now recall the entry in the $i^{th}$ row and the $j^{th}$ column of the matrix $B_Q$ is given by the number of arrows in $Q$ from $i$ to $j$ minus the number of arrows in $Q$ from $j$ to $i$. Moreover, we have that $\dim_k  \Ext^1_{kQ}(S_i, S_j)$ coincides with the number of arrows from $j$ to $i$ in $Q$ (see for example \cite[ Lemma 2.12.b]{Assem}). The result follows.
\end{proof}

\subsection{Normal vectors of the facets}
We now describe the facets of the cones of $\Delta^+_{\seed}(\cX)$ in terms of their normal vectors. 

Since $kQ$ has finite global dimension there is a canonical triangulated equivalence  $D^b_{kQ} \to \KKb(\proj kQ)$ (resp. $D^b_{kQ} \to \KKb(\inj kQ)$) that maps a module to its minimal projective presentation (resp. to its minimal injective presentation). For an object $X $ of $D^b_{kQ}$ we let $P_X$ (resp. $I_X$) be the image of $X$ under this equivalence.
Since $\text{mod } kQ$ is hereditary, any module is quasi-isomorphic to its minimal projective presentation (resp. to its minimal injective presentation). Motivated by this observation, for an object $X$ of $D^b_{kQ}$ we define
\[
\gv^X_P:=\gv^{P_X}_P \quad \quad \text{and} \quad \quad \gv^X_I:=\gv^{I_X}_I.
\]
We denote by $\tau$ the Auslander--Reiten translation of $\text{mod }kQ$ (or $\KKb(\proj \ kQ)$ or $D^b_{kQ}$).

\begin{lemma}
\label{lem:mesh}
Let $Q$ be an acyclic quiver and $X$ be an object in $D^b_Q$. Then
\begin{enumerate}[i)]
    \item[$(i)$] $C_{kQ}^{-1} (\udim\; X)^T=\gv_P^{X}.$
    \item[$(ii)$] $(C_{kQ}^{-1})^T (\udim\; X)^T=-\gv_P^{\tau^{-1}X}.$
\end{enumerate}
\end{lemma}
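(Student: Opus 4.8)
The plan is to read both formulas as change-of-basis identities in the common Grothendieck group $K_0(D^b_{kQ}) \cong K_0(\KKb(\proj kQ)) \cong \Z^n$, where the last identification comes from the triangulated equivalence $D^b_{kQ} \to \KKb(\proj kQ)$ used to define $\gv_P^X$. This group carries three natural bases: the classes $[S_i]$ of the simples, the classes $[P_i]$ of the indecomposable projectives, and the classes $[I_i]$ of the indecomposable injectives. By definition $(\udim X)^T$, $\gv_P^X$ and $\gv_I^X$ are exactly the coordinate vectors of $[X]$ in these three bases. Since the columns of $C_{kQ}$ are the $\udim P_j$, we have $[P_j] = \sum_i (C_{kQ})_{ij}[S_i]$, so a class with projective coordinates $\gv$ has simple coordinates $C_{kQ}\,\gv$; reading this the other way gives $\gv_P^X = C_{kQ}^{-1}(\udim X)^T$, which is $(i)$.

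For $(ii)$ I would first record the injective analogue of $(i)$. Using the standard fact that $\udim I_j$ is the $j$-th column of $C_{kQ}^{T}$ (equivalently that $C_{kQ^{\mathrm{op}}} = C_{kQ}^{T}$ and that injective $kQ$-modules are projective $kQ^{\mathrm{op}}$-modules), the same change-of-basis argument with $C_{kQ}^{T}$ in place of $C_{kQ}$ yields $\gv_I^X = (C_{kQ}^{T})^{-1}(\udim X)^T = (C_{kQ}^{-1})^{T}(\udim X)^T$. It then remains to identify the right-hand side $-\gv_P^{\tau^{-1}X}$ of $(ii)$ with $\gv_I^X$, i.e. to establish the functorial identity $\gv_P^{\tau^{-1}X} = -\gv_I^X$.

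This identity is where the representation theory enters, and it is the step I expect to be the main obstacle, precisely because of the need to track shifts and signs correctly. Since $kQ$ is hereditary, the Auslander--Reiten translation on $D^b_{kQ}$ is $\tau = \nu[-1]$, where $\nu$ is the Nakayama (Serre) functor, a triangulated autoequivalence with $\nu P_i = I_i$. Because $\nu$ sends $P_i$ to $I_i$, applying $\nu$ turns projective coordinates into injective coordinates, so $\gv_I^{\nu Y} = \gv_P^Y$ for every $Y$. Applying this with $Y = \tau^{-1}X$ and using $\nu(\tau^{-1}X) = \nu\nu^{-1}[1]X = X[1]$ together with $\gv_I^{X[1]} = -\gv_I^X$ (the class of a shift is negated in $K_0$), I obtain $\gv_P^{\tau^{-1}X} = \gv_I^{X[1]} = -\gv_I^X$. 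Combining with the injective analogue of $(i)$ gives $(C_{kQ}^{-1})^{T}(\udim X)^T = \gv_I^X = -\gv_P^{\tau^{-1}X}$, which is $(ii)$.

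As an alternative to the Serre-functor computation I could instead invoke the Coxeter matrix directly: the action of $\tau$ on $K_0(D^b_{kQ})$ in the simple basis is $\Phi = -C_{kQ}^{T} C_{kQ}^{-1}$, so $[\tau^{-1}X] = -C_{kQ}(C_{kQ}^{-1})^{T}(\udim X)^T$, and passing to the projective basis via $C_{kQ}^{-1}$ gives $\gv_P^{\tau^{-1}X} = -(C_{kQ}^{-1})^{T}(\udim X)^T$. Either way the content of $(ii)$ is the Coxeter--Nakayama relation, and the only genuine care required is in the sign produced by the shift $[-1]$.
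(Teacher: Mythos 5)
Your proof is correct, and its overall architecture matches the paper's: part $(i)$ is the same change-of-basis argument, and part $(ii)$ is split into the same two sub-claims, namely the injective analogue $\gv_I^X=(C_{kQ}^{-1})^T(\udim X)^T$ and the identity $\gv_I^X=-\gv_P^{\tau^{-1}X}$. Where you genuinely diverge is in how that second identity is established. The paper works at the level of the module category: it reduces by additivity to an indecomposable module $X$, takes a minimal injective presentation $0\to X\to I'\to I''\to 0$, applies the inverse Nakayama functor to get a minimal projective presentation of $\tau^{-1}X$, and then compares $\gv$-vectors term by term. You instead work directly in $D^b_{kQ}$ with the autoequivalence identity $\tau=\nu[-1]$, using that $\nu$ converts projective coordinates to injective coordinates and that the shift negates classes in $K_0$ (or, alternatively, the Coxeter-matrix formula). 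Your route buys something real: it applies uniformly to every object of $D^b_{kQ}$ with no reduction to indecomposable modules, and it automatically handles the cases the paper's presentation argument glosses over --- for an injective module $X$ the module-theoretic minimal injective presentation degenerates ($I'=X$, $I''=0$) and the module-theoretic $\tau^{-1}X$ vanishes, so the paper's exact sequence must tacitly be reinterpreted in the derived category (where $\tau^{-1}X=\nu^{-1}X[1]$), which is exactly the bookkeeping your $\tau=\nu[-1]$ computation makes explicit; likewise the paper's reduction to indecomposable \emph{modules} silently uses compatibility with shifts, which is again your sign argument. The paper's approach, in exchange, is more elementary: it needs only the classical construction of $\tau^{-1}$ via presentations and never invokes the Serre-functor formalism.
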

\begin{proof}
Since the columns of $C_{kQ}$ are the dimension vectors of the indecomposable projective modules, we have that $ C_{kQ}(\gv^{P_i}_P)^T=\udim \ P_i$. Equivalently, $ C_{kQ}^{-1} (\udim \ P_i)^T=\gv^{P_i}_P$. 
Then $(i)$ follows from the fact that dimension vectors and the {\bf g}-vectors are additive in exact triangles and that $X$ is quasi-isomorphic to $P_X$.

We now show $(ii)$. By additivity, we can assume that $ X$ is an indecomposable module. Let
\[
0 \to X \to I^{\prime} \to I^{\prime\prime} \to 0
\]
be a minimal injective presentation of $X$. By definition of the Auslander-Reiten translation and since $ \text{mod }kQ$ is hereditary we have that 
\[0 \to \nu^{-1}I^{\prime} \to \nu^{-1}I^{\prime\prime} \to \tau^{-1}X \to 0.\]
is a minimal projective presentation of $\tau^{-1}X$, where $\nu$ is the Nakayama functor.
By additivity we obtain that 
\[
\gv_P^{\tau^{-1}X}=\gv_P^{\nu^{-1}I^{\prime\prime}}-\gv_P^{\nu^{-1}I^{\prime}}.
\]
Since $ \nu^{-1}I_i=P_i$ we have that $\gv^{\nu^{-1}I^{\prime}}_P=\gv^{I^{\prime}}_I $ and $\gv^{\nu^{-1}I^{\prime\prime}}_P=\gv^{I^{\prime\prime}}_I $. Moreover, by additivity we have that $\gv_I^{X}=\gv^{I^{\prime}}_I - \gv^{I^{\prime\prime}}_I $.
Putting these observations together we obtain that
\[
-\gv^{\tau^{-1}X}_{P}=\gv^{X}_I.
\]
Finally, the columns of $(C^{-1}_{kQ})^T$ are the dimension vectors of the indecomposable injective modules. Hence we can argue as in the preceding case that $(C_{kQ})^{-1}(\udim X)^T=\gv^X_I $. The result follows. 
\end{proof}

For every indecomposable complex $X$ of $D^b_{kQ}$ there is an Auslander-Reiten triangle 
\[
\label{eq:AR_triang}
X\to X_1\oplus \dots \oplus X_r \to \tau^{-1}X \to X[1],
\]
where the $X_1,\dots, X_n$ are indecomposable objects, see \cite[\S~3]{Hap}.
A mesh is a sequence of the form $X\to X_1\oplus \dots \oplus X_r \to \tau^{-1}X$; we write
\[
X_i\in \rmesh_X
\]
to indicate that $X_i$ is an indecomposable summand in the middle of such mesh.

\begin{theorem} \label{dv-gv}
Let $Q$ be an acyclic quiver and $X$ be an indecomposable object in $D^b_Q$, then
\[B_Q(\udim \; X)^T=-\gv^X_P -\gv^{\tau^{-1}X}_P=-\sum_{X_i\in \rmesh_X} \gv^{X_i}_P.\]
\end{theorem}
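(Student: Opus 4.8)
The plan is to obtain the two equalities separately: the leftmost one is a purely formal consequence of the two preceding lemmas, while the rightmost one comes from transporting the Auslander--Reiten triangle into $\KKb(\proj kQ)$ and invoking additivity of $\gv$-vectors.

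First I would establish $B_Q(\udim\; X)^T = -\gv_P^X - \gv_P^{\tau^{-1}X}$. By Lemma~\ref{B-C} we may write $B_Q = (C_{kQ}^{-1})^T - C_{kQ}^{-1}$, and applying this linear map to the column vector $(\udim\; X)^T$ splits the computation into the two pieces treated by Lemma~\ref{lem:mesh}. Indeed, part $(i)$ gives $C_{kQ}^{-1}(\udim\; X)^T = \gv_P^X$ and part $(ii)$ gives $(C_{kQ}^{-1})^T(\udim\; X)^T = -\gv_P^{\tau^{-1}X}$, so that
\[
B_Q(\udim\; X)^T = (C_{kQ}^{-1})^T(\udim\; X)^T - C_{kQ}^{-1}(\udim\; X)^T = -\gv_P^{\tau^{-1}X} - \gv_P^X,
\]
which is exactly the first claimed equality.

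For the second equality I would use the Auslander--Reiten triangle $X \to X_1 \oplus \dots \oplus X_r \to \tau^{-1}X \to X[1]$, which exists for every indecomposable $X$ of $D^b_{kQ}$ by \cite[\S3]{Hap}. Since $kQ$ has finite global dimension, the equivalence $D^b_{kQ} \to \KKb(\proj kQ)$ carries this into an exact triangle of $\KKb(\proj kQ)$, where Remark~\ref{rem:additivity} applies: additivity of $\gv_P$ in exact triangles (together with additivity over direct sums) yields $\gv_P^{X_1 \oplus \dots \oplus X_r} = \gv_P^X + \gv_P^{\tau^{-1}X}$, i.e. $\sum_{X_i \in \rmesh_X} \gv_P^{X_i} = \gv_P^X + \gv_P^{\tau^{-1}X}$. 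Negating both sides gives the rightmost equality, completing the proof.

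I do not expect a serious obstacle here, since both halves reduce to results already in hand; the only points requiring care are bookkeeping ones. One should confirm that the signs from Lemma~\ref{lem:mesh}$(ii)$ combine correctly with the decomposition of $B_Q$, and that $\gv_P^X$ is well defined for arbitrary objects of $D^b_{kQ}$ via $\gv_P^X := \gv_P^{P_X}$ so that the AR triangle of $D^b_{kQ}$ genuinely transports to one in $\KKb(\proj kQ)$ to which Remark~\ref{rem:additivity} is applicable. Once these are checked, the identity follows by direct substitution.
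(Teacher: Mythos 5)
Your proposal is correct and follows essentially the same route as the paper: the first equality via Lemma~\ref{B-C} combined with Lemma~\ref{lem:mesh}$(i)$,$(ii)$, and the second via additivity of $\gv$-vectors applied to the Auslander--Reiten triangle, which the paper invokes in the same breath (citing ``the additivity of {\bf g}-vectors'') though more tersely than you spell it out. No gaps; your extra care about transporting the AR triangle into $\KKb(\proj kQ)$ and defining $\gv_P^X:=\gv_P^{P_X}$ only makes explicit what the paper leaves implicit.
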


\begin{proof}
By Lemma~\ref{B-C}, we know that $B_Q=(C_{kQ}^{-1})^T-C_{kQ}^{-1}$. 
Using Lemma~\ref{lem:mesh} and the additivity of {\bf g}-vectors we obtain that
\[
B_Q (\udim\; X)^T=(C_{kQ}^{-1})^{T}(\udim\; X)^T-C_{kQ}^{-1}(\udim\; X)^T = -\gv^X_P -\gv^{\tau^{-1}X}_P= -\sum_{X_i\in \rmesh_X} \gv^{X_i}_P.
\] 
\end{proof}

\begin{example} \label{2Kronecker}
Let $Q$ be the $2$-Kronecker quiver
\[\begin{tikzcd}
Q: 1 && 2. \arrow[ll,shift left,""]
  \arrow[ll,shift right,swap,""]
\end{tikzcd}\]
So
\[B_Q=\begin{pmatrix}
 0 & -2 \\
2 & 0 
\end{pmatrix}.\]

We proceed to verify Theorem~\ref{dv-gv} in this case. Observe that it is enough to verify the statement for the stalk complexes concentrated at degree $0$. Indeed, for every $X\in D^b_{kQ}$ as $\udim \ X[1]=-\udim \ X$ and $\gv^{X[1]}_P=-\gv^X_P$. 

It is well-known that every indecomposable preprojective module in $kQ$ has a dimension vector of the form $(d, d+1)$, while every indecomposable preinjective module has a dimension vector of the form $(d+1, d)$, where $d\in \mathbb{N}$. Furthermore, indecomposable modules in the regular component of the Auslander-Reiten quiver of $kQ$ have dimension vectors of the form $(d+1, d+1)$.

Additionally, at the level of dimension vectors, the meshes in preprojective and preinjective components, $\mathcal{P}(A)$ and $\mathcal{Q}(A)$, of the Aulander-Reiten quiver in the derived category are of the following form, respectively 
\[
\begin{tikzcd}
(d,d+1) \arrow[rr,shift left]
  \arrow[rr,shift right,swap] && (d+1,d+2) \arrow[rr,shift left]
  \arrow[rr,shift right,swap] && (d+2,d+3)
\end{tikzcd}
\]
and 
\[
\begin{tikzcd}
(d+3,d+2) \arrow[rr,shift left]
  \arrow[rr,shift right,swap] && (d+2,d+1) \arrow[rr,shift left]
  \arrow[rr,shift right,swap] && (d+1,d)
\end{tikzcd}
\]
and in the regular component $\mathcal{R}(A)$ are of the form

\begin{center}
\begin{tikzpicture}[scale=0.8]

  \draw[dashed,color=gray] (0,1.5) ellipse (1.5 and 0.5);
  \draw[dashed,color=gray] (0,3) ellipse (1.5 and 0.5);
  \draw[dashed,color=gray] (0,4.5) ellipse (1.5 and 0.5);
  \draw[color=gray] (-1.5,1.5) -- (-1.5,4.5);
  \draw[color=gray] (1.5,1.5) -- (1.5,4.5);

  \node (1) at (1.5,4.5) {$\bullet$};
  \node (5) at (2.1,4.5) {\hspace{1 cm} \small $(d+2,d+2)$};
  \node (2) at (-1.5,3) {$\bullet$};
  \node (6) at (-2.1,3) {\small $(d+1,d+1) \quad \quad \quad \quad$ };
  \node (3) at (1.5,1.5) {$\bullet$};  
  \node (7) at (2.1,1.5) {\small $(d,d)$};
  
  \draw (1) edge[->, >=latex, bend left=15] (2);
  \draw (2) edge[->, >=latex, bend left=15] (1);
  \draw (2) edge[->, >=latex, bend left=15] (3);
  \draw (3) edge[->, >=latex, bend left=15] (2);
\end{tikzpicture}
\end{center}

On the other hand, it can be observed that if $X$ is an indecomposable module with $\udim \; X = (d_1,d_2)$, $d_1,d_2\in \mathbb{N}$, then $\gv_P^X=(g_1,g_2)$, where $g_1=d_1$ and the value of $g_2$ depends on the specific form of $X$ as follows:

\[ g_2= \left\{ \begin{array}{lll}
-d_2+2, & \text{if} & X\in \mathcal{P}(A) \\
\\ -d_2-2, & \text{if} & X\in \mathcal{Q}(A) \\
\\ -d_2, & \text{if} & X \in \mathcal{R}(A) \\
\end{array}
\right.\]

Then, we have the following cases:

\begin{enumerate}
    \item for $X\in \mathcal{P}(A)$  with $\udim \ X = (d,d+1)$,  
\begin{align*}
    B_Q (\udim \ X) = & (-2(d+1),2d) = -2 (d+1,-(d+2)+2) =-2 \gv_P^{X^{\prime}},
\end{align*}
where $\udim \ X^{\prime} = (d+1,d+2)$. \par \bigskip 

    \item for $X\in \mathcal{Q}(A)$ with $\udim \; X = (d+2,d+1)$,
\begin{align*}
    B_Q (\udim \; X) = & (-2(d+1),2(d+2)) = -2 (d+1,-d-2) =-2 \gv_P^{X^{\prime}},
\end{align*}
where $\udim \ X^{\prime} = (d+1,d)$. \par \bigskip  

    \item for $X$ with $\udim \; X = (1,1)$, 
\begin{align*}
    B_Q (\udim \; X) = & (-2,2) = - \gv_P^{X^{\prime}},
\end{align*}
where $\udim \ X^{\prime} = (2,2)$. \par \bigskip  

    \item for $X$ in the regular component such that $\udim \; X = (d,d)$, $d>2$, 
\begin{align*}
    B_Q (\udim \; X) = & (-2d,2d) = -\big[(d+1,-d-1)+(d-1,-d+1)\big] =- [\gv_P^{X^{\prime\prime}}+\gv_P^{X^{\prime}}],
\end{align*}
where $\udim \ X^{\prime} = (d-1,d-1)$ and $\udim \ X^{\prime\prime} = (d+1,d+1)$. \par \bigskip  
\end{enumerate}
This illustrates Theorem~\ref{dv-gv} for all $kQ$-modules $X$. We can also see from these computations that the result holds for all $X\in D^b_{kQ}$. 
\end{example}

\subsection{Supporting hyperplanes via {\bf g}-vectors}
The aim of this subsection is to apply Theorem~\ref{dv-gv} to give the equation of every supporting hyperplane of a cone of $\Delta^+_{\seed}(\cX)$ using {\bf g}-vectors.

Using the description of the cone $(p^*)^{-1}(\mathcal G_{\seed'})$ through the system of inequalities
\[
C_{\seed'}^T B_Q \beta_{\seed} \geq \textbf{0},
\] 
as discussed in \S~\ref{sec:inequalities}, the following result tells us that a normal vector of the hyperplane of the form $c_{i;\seed'}^T B_Q \beta_{\seed} = 0$ can be described using {\bf g}-vectors of $kQ$-modules.

\begin{corollary} \label{Corollary-normalvectorhyperplane}
Suppose $c_{i;\seed'}$ is a positive $\cv$-vector. Let $X$ be the indecomposable rigid $kQ$-module such that $c_{i;\seed'}^T=\udim \ X$. Then the vector $\displaystyle{\sum_{X_j\in \rmesh_X}\gv_P^{X_j}}$ is normal to the hyperplane $
c_{i;\seed'}^T B_Q \beta_{\seed} = 0$.
\end{corollary}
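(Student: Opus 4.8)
The plan is to reduce the statement to Theorem~\ref{dv-gv} by a transpose argument, after which there is essentially nothing left to prove. As recalled at the start of \S\ref{sec:supporting_hyperplanes} and in the introduction, when the elements of $M$ are written in the basis determined by $\seed$, a normal vector of the hyperplane $c_{i;\seed'}^T\cdot B_Q\cdot\beta_\seed=0$ is exactly the (row) vector $c_{i;\seed'}^T\cdot B_Q$. So the whole content of the corollary is the identity
\[
c_{i;\seed'}^T\cdot B_Q=\sum_{X_j\in\rmesh_X}\gv_P^{X_j}\quad\text{in } M.
\]

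First I would record why $X$ is available: the hypothesis that $c_{i;\seed'}$ is a positive $\cv$-vector is precisely what \cite{Najera,Nagao} turn into the existence of an indecomposable rigid $kQ$-module $X$, viewed as a stalk complex in degree $0$, with $\udim\ X=c_{i;\seed'}^T$. In particular $X$ is an indecomposable object of $D^b_{kQ}$, so Theorem~\ref{dv-gv} applies to it verbatim.

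The remaining step is the bookkeeping of transposes together with the skew-symmetry $B_Q^T=-B_Q$. Since $c_{i;\seed'}^T=\udim\ X$ as row vectors, we have $c_{i;\seed'}=(\udim\ X)^T$ as column vectors, and hence
\[
\left(c_{i;\seed'}^T\cdot B_Q\right)^T=B_Q^T\cdot(\udim\ X)^T=-B_Q\cdot(\udim\ X)^T.
\]
By Theorem~\ref{dv-gv} the last expression equals $\sum_{X_j\in\rmesh_X}\gv_P^{X_j}$, and transposing back gives the displayed identity. Thus $\sum_{X_j\in\rmesh_X}\gv_P^{X_j}$ is a normal vector of the hyperplane.

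The argument is short, and the only genuine pitfall---hence the one place to be careful---is the row/column convention, since $\cv$-vectors are columns while dimension vectors and $\gv$-vectors are recorded as rows. It is exactly the skew-symmetry of $B_Q$ that turns the column identity of Theorem~\ref{dv-gv} into the row identity describing the normal vector, so no input beyond Theorem~\ref{dv-gv} and the $\cv$-vector/rigid-module dictionary enters.
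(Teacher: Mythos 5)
Your proposal is correct and takes essentially the same route as the paper: both proofs transpose the identity of Theorem~\ref{dv-gv} and use the skew-symmetry $B_Q^T=-B_Q$ to identify $c_{i;\seed'}^T\cdot B_Q$ with $\sum_{X_j\in \rmesh_X}\gv_P^{X_j}$ (up to the row/column bookkeeping both treat loosely). The only difference is cosmetic: you additionally spell out, via \cite{Najera,Nagao}, why the module $X$ exists, which the paper folds into the hypothesis of the corollary.
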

    
\begin{proof}
It is immediate that Theorem~\ref{dv-gv} implies that \begin{align*}
(-B_Q   (\udim \; X)^T)^{T}= & \bigg(\sum_{X_i\in \rmesh_X} \gv_P^{X_i}\bigg)^T    \\
(\udim \; X)   (-B_Q^{T})= & \sum_{X_i\in \rmesh_X} (\gv_P^{X_i})^T 
\end{align*}
Hence, we can deduce
\[ \alpha_{i}=c_{i;\seed'}^{T}  B_Q \beta_{\seed}=\sum_{X_j\in \rmesh_X}(\gv_P^{X_j})^T \beta_{\seed},\]
and the result follows.
\end{proof}

\begin{remark}\label{rem:relationgvectors}
The results of \S~\ref{dim} together with Corollary~\ref{Corollary-normalvectorhyperplane} imply that we can search for linear relations with positive coefficients between certain sets of {\bf g}-vectors to determine the implicit equalities of the system $C_{\seed'}^T B_Q \beta_{\seed} \geq \textbf{0}$.
More precisely, if $X_i$ is an indecomposable rigid $kQ$-module such that $c_{i;\seed'}^T=\udim \ X_i$, for $i\in \{1,\dots,n\}$ then
\[
\sum_{k\in H}\lambda_k(c_{k;\seed'}^T   B_Q \beta_{\seed})=0,
\]
 is equivalent to
\begin{equation}
 \label{eq:equiv}   
\sum_{k\in H}\lambda_k(\gv_P^{X_k}+\gv_P^{\tau^{-1}X_k})=\sum_{\substack{X_{j_k}\in \rmesh_{X_k} \\ k\in H}}\lambda_k\gv_P^{X_{j_k}}=\textbf{0}.
\end{equation}    
Hence, finding relations as in \eqref{eq:equiv} allows us to identify sets of implicit equations of the system.
\end{remark}

\begin{remark}
For each $\seed'\in \T_n$, all the hyperplanes $c_{i;\seed'}^{T}   B_Q   \beta_\seed=0$ can be determined through a ``mesh relation" inherited from the mesh relation satisfied by the corresponding dimension vectors in the Auslander-Reiten quiver in $\text{mod } kQ$. Consequently, the inequalities linked to the indecomposable projective modules assume a crucial role in deducing other inequalities related to $\cv$-vectors. For instance, for quivers of finite type, the inequalities associated with the indecomposable projective modules allow determining all the supporting hyperplanes of the cones.
\end{remark}

\begin{example}
All the hyperplanes associated with the $2$-Kronecker quiver in Example~\ref{2Kronecker} determined by a positive $\cv$-vector $c_{i;\seed'}^T=\udim(X)$, take the following forms:

\begin{enumerate}
    \item $(d+1)\beta_1-d\beta_2=0$, if $X\in \mathcal{P}(A)$, \par 
    
    \item $d\beta_1-(d+1)\beta_2=0$, if $X\in \mathcal{Q}(A)$, \par
    
    \item $\beta_1-\beta_2=0$, if $X$ is a regular module.
\end{enumerate}
where $d\in\mathbb{N}$. These hyperplanes can be positioned in the Auslander-Reiten quiver according to the location of their respective $\cv$-vectors (see. Figure~\ref{fig:Hyperplanes_Auslander_Reiten_quiver}).

\begin{figure}[H]
\begin{center}
\begin{tikzpicture}[scale=0.75]

  \draw[dashed,color=gray] (0,1.5) ellipse (1 and 0.5);
  \draw[dashed,color=gray] (0,3) ellipse (1 and 0.5);
  \draw[dashed,color=gray] (0,4.5) ellipse (1 and 0.5);
  \draw[color=gray] (-1,1.5) -- (-1,5.5);
  \draw[color=gray] (1,1.5) -- (1,5.5);
  \node (4) at (0,5.5) {\small $\vdots$};
    
  \node (1) at (1,4.5) {$\bullet$};
  \node (5) at (2.1,4.5) {\small $\textcolor{green}{\mathbf{3\beta_1-3\beta_2}}$};
  \node (2) at (-1,3) {$\bullet$};
  \node (6) at (-1.5,3) {\small $\textcolor{green}{\mathbf{2\beta_1-2\beta_2}} \quad \quad \quad$ };
  \node (3) at (1,1.5) {$\bullet$};  
  \node (7) at (2.3,1.5) {\small $\textcolor{green}{\mathbf{\beta_1-\beta_2}}$};
  
  \draw (1) edge[->, >=latex, bend left=15] (2);
  \draw (2) edge[->, >=latex, bend left=15] (1);
  \draw (2) edge[->, >=latex, bend left=15] (3);
  \draw (3) edge[->, >=latex, bend left=15] (2);


\node (8) at (-5,3.6) {$\bullet$};
  \node (9) at (-5,4) {\small $\textcolor{blue}{4\beta_1-3\beta_2}$ };
\node (10) at (-6.5,2.4) {$\bullet$}; 
  \node (11) at (-6.5,2) {\small $\textcolor{blue}{3\beta_1-2\beta_2}$};
\node (12) at (-8,3.6) {$\bullet$};
  \node (13) at (-8,4) {\small $\textcolor{blue}{2\beta_1-\beta_2}$ };  
\node (14) at (-9.5,2.4) {$\bullet$}; 
  \node (15) at (-9.5,2) {\small $\textcolor{blue}{\beta_1}$};
\node (16) at (-4.5,3) {$\dots$}; 

\draw (14) edge[->, >=latex, bend left=15] (12);
\draw (14) edge[->, >=latex, bend right=15] (12);
\draw (12) edge[->, >=latex, bend left=15] (10);
\draw (12) edge[->, >=latex, bend right=15] (10);
\draw (10) edge[->, >=latex, bend left=15] (8);
\draw (10) edge[->, >=latex, bend right=15] (8);
  
 
\node (17) at (9.5,3.6) {$\bullet$};
  \node (18) at (9.5,4) {\small $\textcolor{red}{-\beta_2}$ };
\node (19) at (8,2.4) {$\bullet$}; 
  \node (20) at (8,2) {\small $\textcolor{red}{\beta_1-2\beta_2}$};
\node (21) at (6.5,3.6) {$\bullet$};
  \node (22) at (6.5,4) {\small $\textcolor{red}{2\beta_1-3\beta_2}$ };  
\node (23) at (5,2.4) {$\bullet$}; 
  \node (24) at (5,2) {\small $\textcolor{red}{3\beta_1-4\beta_2}$};
\node (25) at (4.5,3) {$\dots$};

\draw (23) edge[->, >=latex, bend left=15] (21);
\draw (23) edge[->, >=latex, bend right=15] (21);
\draw (21) edge[->, >=latex, bend left=15] (19);
\draw (21) edge[->, >=latex, bend right=15] (19);
\draw (19) edge[->, >=latex, bend left=15] (17);
\draw (19) edge[->, >=latex, bend right=15] (17);
\end{tikzpicture}
\end{center}
    \caption{Supporting hyperplanes in the Auslander-Reiten quiver in $\text{mod } kQ$.}    \label{fig:Hyperplanes_Auslander_Reiten_quiver}
\end{figure}

In this way, the supporting hyperplanes can be visualized in Figure~\ref{fig:Supporting_hyperplanes_kronecker}.

\begin{figure}[H]
\begin{center}
\begin{tikzpicture}[scale=0.9]
\draw[red,thick,<->] (-4,0)--(4,0) node[right] {$-\beta_2=0$}; 
%
\draw[blue,thick,<->] (0,-4)--(0,4) node[left,above] {$\beta_1=0$}; 
\draw[green,thick,-] (4,4) -- (-4,-4); 
\node[color=green,right] at (-5,-4.5) {$\mathbf{\beta_1=\beta_2}$};

%
%
\draw[blue,thick,-] (-2,-4) -- (2,4); 
\draw[blue,thick,-] (-8/3,-4) -- (8/3,4); 
\draw[blue,thick,-] (-3,-4) -- (3,4); 
\draw[blue,thick,-] (-16/5,-4) -- (16/5,4); 
\node[color=blue,right] at (-3.95,-4) {$\dots$};
\node[color=blue,right] at (3.2,4) {$\dots$};

\draw[red,thick,-] (-4,-2) -- (4,2); 
\draw[red,thick,-] (-4,-8/3) -- (4,8/3); 
\draw[red,thick,-] (-4,-3) -- (4,3); 
\draw[red,thick,-] (-4,-16/5) -- (4,16/5); 
\node[color=red,right] at (-4.15,-3.4) {$\vdots$};
\node[color=red,right] at (3.8,3.7) {$\vdots$};

\end{tikzpicture}
\end{center}
\caption{Supporting hyperplanes of $\underline{\Delta_{\seed}^+(\cX)}$.}
\label{fig:Supporting_hyperplanes_kronecker}
\end{figure}

Hyperplanes exhibiting a slope greater than $1$ (including the hyperplane with infinite slope) represent the modules belonging to $\mathcal{P}(A)$. The hyperplane with slope $1$ is associated with the regular component $\mathcal{R}(A)$. Hyperplanes that show slopes greater than or equal to 0 but less than 1 are related to the set $\mathcal{Q}(A)$.
Besides, by the relations between {\bf g}-vectors and $p^{*}$ obtained in Example~\ref{2Kronecker}, the behavior of the cones and the generating rays can be observed in Figure~\ref{fig:cluster_complex_kronecker}.
\begin{figure}[H]
\begin{center}
\begin{tikzpicture}[scale=0.75]
\draw[black,thick,<->] (-4.5,0)--(4.5,0) node[right,below] {}; 
\draw[black,thick,<->] (0,4.5)--(0,-4.5) node[right, below] {}; 
\node[color=black] at (0,-5) {$\underline{\Delta^+_{\seed}(\cA)}$};
\draw[green,thick,dashed,-] (0,0) -- (4,-4); 

\draw[gray,thick,-] (4,0) -- (4,0);
\draw[gray,thick,-] (0,4) -- (0,-4); 
\draw[gray,thick,-] (2,-4) -- (0,0); 
\draw[gray,thick,-] (8/3,-4) -- (0,0); 
\draw[gray,thick,-] (3,-4) -- (0,0); 
\draw[gray,thick,-] (16/5,-4) -- (0,0); 
\node[color=gray,right] at (3.1,-4) {$\dots$};

\draw[gray,thick,-] (4,-2) -- (0,0); 
\draw[gray,thick,-] (4,-8/3) -- (0,0); 
\draw[gray,thick,-] (4,-3) -- (0,0); 
\draw[gray,thick,-] (4,-16/5) -- (0,0); 
\node[color=gray,right] at (3.75,-3.4) {$\vdots$};
\node[color=black,right] at (5.2,0) {$\xrightarrow{(p^{*})^{-1}} \quad$};
\end{tikzpicture} \begin{tikzpicture}[scale=0.7]
\draw[black,thick,<->] (-4.5,0)--(4.5,0) node[right,below] {$\beta_1$}; 
\draw[black,thick,<->] (0,-4.5)--(0,4.5) node[left,above] {$\beta_2$}; 
\node[color=black] at (0,-5) {$\underline{\Delta_{\seed}^+(\cX)}$};
\draw[green,thick,dashed,-] (0,0) -- (-4,-4); 

\draw[gray,thick,-] (-4,0) -- (4,0);
\draw[gray,thick,-] (0,4) -- (0,-4); 
\draw[gray,thick,-] (-2,-4) -- (0,0); 
\draw[gray,thick,-] (-8/3,-4) -- (0,0); 
\draw[gray,thick,-] (-3,-4) -- (0,0); 
\draw[gray,thick,-] (-16/5,-4) -- (0,0); 
\node[color=blue,right] at (-4,-4) {$\dots$};

\draw[gray,thick,-] (-4,-2) -- (0,0); 
\draw[gray,thick,-] (-4,-8/3) -- (0,0); 
\draw[gray,thick,-] (-4,-3) -- (0,0); 
\draw[gray,thick,-] (-4,-16/5) -- (0,0); 
\node[color=gray,right] at (-4.2,-3.4) {$\vdots$};
\end{tikzpicture}
\end{center}
\caption{Cluster complexes $\underline{\Delta_{\seed}^+(\cA)}$ and $\underline{\Delta_{\seed}^+(\cX)}$.}
\label{fig:cluster_complex_kronecker}
\end{figure}
\end{example}


\begin{example} \label{A3}
Consider the quiver $Q=1\rightarrow 2 \leftarrow 3$ and let $B_Q=\begin{psmallmatrix}
0 & 1 & 0 \\
-1 & 0 & -1 \\
0 & 1 & 0 
\end{psmallmatrix}$ be its associated matrix. We proceed to describe the cones $(p^*)^{-1}(\mathcal{G}_{\seed'})$ for all $\seed'$. It is convenient to work with triangulations of the $6$-gon to consider all the possible seeds.
We let  
\begin{center}
\begin{tikzpicture}[scale=1]
    
    \foreach \x in {30,90,...,330} {
        \draw[fill] (\x:1 cm) -- (\x +60:1 cm);
      } 
      \draw[fill][red] (30:1 cm) -- node[black, anchor=east] {$\bullet$} (150:1 cm);
      \draw[fill][red] (30:1 cm) -- node[black] {$\bullet$} (210:1 cm);
      \draw[fill][red] (210:1 cm) -- node[black, anchor=west] {$\bullet$} (330:1 cm);

\draw[->, >=latex] (-0.2,0.5) -- (-0.03,0.1);
\draw[->, >=latex] (0.2,-0.45) -- (0.03,-0.1);
\end{tikzpicture}
\end{center} 
be the triangulation associated to $\seed$.
We proceed to verify that the seeds $\seed'$ such that $(p^*)^{-1}(\mathcal{G}_{\seed'})$ is $3$-dimensional are precisely the bipartite seeds. We also identify those seeds that give rise to $2$ and $1$-dimensional cones. 

In the case of seeds corresponding to bipartite orientations of $\mathbb{A}_3$, they generate the maximal cones, as no inequalities belong to $[C_{\seed'}^TB_Q]^=\beta_{\seed} \geq \textbf{0}^{=}$ as shown in Table~\ref{tab:3A3}.
\begin{table}[H]
\begin{center}
\begin{tabular}{| c | c | c | c | c | c | c |}
\hline
Triangulation & \begin{tikzpicture}[scale=0.8]
    
    \foreach \x in {30,90,...,330} {
        \draw[fill] (\x:1 cm) -- (\x +60:1 cm);
      } 
      \draw[fill][red] (30:1 cm) -- node[black, anchor=east] {$\bullet$} (150:1 cm);
      \draw[fill][red] (30:1 cm) -- node[black] {$\bullet$} (210:1 cm);
      \draw[fill][red] (210:1 cm) -- node[black, anchor=west] {$\bullet$} (330:1 cm);

\draw[->, >=latex] (-0.25,0.5) -- (-0.04,0.1);
\draw[->, >=latex] (0.25,-0.45) -- (0.04,-0.1);
\end{tikzpicture} & 

\begin{tikzpicture}[scale=0.8]
    
    \foreach \x in {30,90,...,330} {
        \draw[fill] (\x:1 cm) -- (\x +60:1 cm);
      } 
      \draw[fill][red] (90:1 cm) -- (210:1 cm);
      \draw[fill][red] (30:1 cm) -- node[black] {$\bullet$} (210:1 cm);
      \draw[fill][red] (30:1 cm) -- (270:1 cm);

\node (1) at (-0.3,0.5) {$\bullet$};
\node (2) at (0.3,-0.5) {$\bullet$};
         
\draw[->, >=latex] (0,0) -- (-0.27,0.47);
\draw[->, >=latex] (0,0) -- (0.27,-0.47);

\end{tikzpicture}
& 

\begin{tikzpicture}[scale=0.8]
    
    \foreach \x in {30,90,...,330} {
        \draw[fill] (\x:1 cm) -- (\x +60:1 cm);
      } 
      \draw[fill][red] (90:1 cm) -- (210:1 cm);
      \draw[fill][red] (90:1 cm) -- node[black] {$\bullet$} (270:1 cm);
      \draw[fill][red] (30:1 cm) -- (270:1 cm);
      
\node (1) at (-0.65,-0.1) {$\bullet$};
\node (2) at (0.65,0.1) {$\bullet$};
         
\draw[<-, >=latex] (-0.03,0) -- (-0.62,-0.08);
\draw[<-, >=latex] (0.03,0) -- (0.62,0.08);
\end{tikzpicture}
& 
\begin{tikzpicture}[scale=0.8]
    
    \foreach \x in {30,90,...,330} {
        \draw[fill] (\x:1 cm) -- (\x +60:1 cm);
      } 
      \draw[fill][red] (90:1 cm) -- (330:1 cm);
      \draw[fill][red] (90:1 cm) -- node[black] {$\bullet$} (270:1 cm);
      \draw[fill][red] (150:1 cm) -- (270:1 cm);
      
\node (1) at (-0.65,0.1) {$\bullet$};
\node (2) at (0.65,-0.1) {$\bullet$};
         
\draw[->, >=latex] (0,0) -- (-0.56,0.08);
\draw[->, >=latex] (0,0) -- (0.56,-0.08);
\end{tikzpicture}
&
\begin{tikzpicture}[scale=0.8]
    
    \foreach \x in {30,90,...,330} {
        \draw[fill] (\x:1 cm) -- (\x +60:1 cm);
      } 
      \draw[fill][red] (90:1 cm) -- (330:1 cm);
      \draw[fill][red] (150:1 cm) -- node[black] {$\bullet$} (330:1 cm);
      \draw[fill][red] (150:1 cm) -- (270:1 cm);
      
\node (1) at (-0.3,-0.5) {$\bullet$};
\node (2) at (0.3,0.5) {$\bullet$};
         
\draw[<-, >=latex] (-0.03,-0.03) -- (-0.27,-0.47);
\draw[<-, >=latex] (0.03,0.03) -- (0.27,0.47);
\end{tikzpicture}
& 
\begin{tikzpicture}[scale=0.8]
    
    \foreach \x in {30,90,...,330} {
        \draw[fill] (\x:1 cm) -- (\x +60:1 cm);
      } 
      \draw[fill][red] (30:1 cm) -- node[black, anchor=west] {$\bullet$} (150:1 cm);
      \draw[fill][red] (150:1 cm) -- node[black] {$\bullet$} (330:1 cm);
      \draw[fill][red] (210:1 cm) -- node[black, anchor=east] {$\bullet$} (330:1 cm);
               
\draw[->, >=latex] (0,0) -- (-0.27,-0.47);
\draw[->, >=latex] (0,0) -- (0.27,0.47);
\end{tikzpicture}\\ \hline
 &  &  &  &  &  &\\
$\cv-$matrix & $\begin{psmallmatrix} 
1 & 0 & 0 \\
0 & 1 & 0 \\
0 & 0 & 1 
\end{psmallmatrix}$ & $\begin{psmallmatrix} 
-1 & 1 &  0 \\
 0 & 1 &  0 \\
 0 & 1 & -1 
\end{psmallmatrix}$ & $\begin{psmallmatrix} 
0 & -1 & 1 \\
1 & -1 & 1 \\
1 & -1 & 0 
\end{psmallmatrix}$ & $\begin{psmallmatrix} 
 0 & 0 & -1 \\
-1 & 1 & -1 \\
-1 & 0 &  0 
\end{psmallmatrix}$ & $\begin{psmallmatrix} 
 0 &  0 & -1 \\
 0 & -1 &  0 \\
-1 &  0 &  0 
\end{psmallmatrix}$ & $\begin{psmallmatrix} 
0 &  0 & 1 \\
0 & -1 & 0 \\
1 &  0 & 0 
\end{psmallmatrix}$ \\
&  &  &  &  &  &\\
\hline
& & & & & & \\
$C_{\seed'}^TB_Q \beta_{\seed} \geq \textbf{0}$ & $\begin{smallmatrix} 
\beta_2 \geq 0 \\
-\beta_1-\beta_3 \geq 0 \\
\beta_2 \geq 0
\end{smallmatrix}$ & $\begin{smallmatrix} 
-\beta_2 \geq 0 \\
-\beta_1+2\beta_2-\beta_3 \geq 0 \\
-\beta_2 \geq 0
\end{smallmatrix}$ & $\begin{smallmatrix} 
-\beta_1+\beta_2-\beta_3 \geq 0 \\
\beta_1-2\beta_2+\beta_3 \geq 0 \\
-\beta_1+\beta_2-\beta_3 \geq 0
\end{smallmatrix}$ & $\begin{smallmatrix} 
\beta_1-\beta_2+\beta_3 \geq 0 \\
-\beta_1-\beta_3 \geq 0 \\
\beta_1-\beta_2+\beta_3 \geq 0
\end{smallmatrix}$ & $\begin{smallmatrix} 
-\beta_2 \geq 0 \\
\beta_1+\beta_3 \geq 0 \\
-\beta_2 \geq 0
\end{smallmatrix}$ & $\begin{smallmatrix} 
\beta_2 \geq 0 \\
\beta_1+\beta_3 \geq 0 \\
\beta_2 \geq 0
\end{smallmatrix}$ \\ 
& & & & & & \\ \hline
\end{tabular}
\caption{3-dimensional cones $\mathbb{A}_3$.}
\label{tab:3A3}
\end{center}
\end{table}
Additionally, for each bipartite seed $\seed'$, we observe that $c_{1;\seed'}  B_Q  \beta_{\seed} = c_{3;\seed'} B_Q  \beta_{\seed}$, hence the subset $\{c_{1;\seed'} B_Q  \beta_{\seed} \geq 0,  c_{2;\seed'} B_Q  \beta_{\seed}\geq 0\}$ of $[C_{\seed'}^TB_Q]^{>}\beta_{\seed} \geq \textbf{0}^{>}$ is not redundant. Consequently, $(p^{*})^{-1}(\mathcal G_{\seed'})$ has two facets:
\[
F_1 := (p^{*})^{-1}(\mathcal G_{\seed'}) \cap \{\beta \in N_{\R} \mid c_{1;\seed'}^{T} B_{\seed} \beta_{\seed} = 0\}, \quad F_2 := (p^{*})^{-1}(\mathcal G_{\seed'}) \cap \{\beta \in N_{\R} \mid c_{2;\seed'}^{T} B_{\seed} \beta_{\seed} = 0\}.
\]
Next, we consider the Auslander-Reiten quiver of $ D^b_{kQ}$:  

\begin{center}
\begin{tikzpicture}[scale=1]
    
\node (2) at (2,0) {100};
\node (3) at (4.2,0) {011};
\node (4) at (6.4,0) {$(001)[1]$};
\node (5) at (8.8,0) {$(110)[1]$};

\node (7) at (3.1,1) {111};
\node (8) at (5.3,1) {010};
\node (9) at (7.6,1) {$(111)[1]$};
\node (10) at (10,1) {$(010)[1]$};

\node (12) at (2,2) {001};
\node (13) at (4.2,2) {110};
\node (14) at (6.4,2) {$(100)[1]$};
\node (15) at (8.8,2) {$(011)[1]$};

\node (20) at (1,0) {$\cdots$};
\node (20) at (1,2) {$\cdots$};

\node (22) at (11.3,1) {$\cdots$};

\draw[->, >=latex] (2) -- (7);
\draw[->, >=latex] (7) -- (3);
\draw[->, >=latex] (3) -- (8);
\draw[->, >=latex] (8) -- (4);
\draw[->, >=latex] (4) -- (9);
\draw[->, >=latex] (9) -- (5);

\draw[->, >=latex] (12) -- (7);
\draw[->, >=latex] (7) -- (13);
\draw[->, >=latex] (13) -- (8);
\draw[->, >=latex] (8) -- (14);
\draw[->, >=latex] (14) -- (9);
\draw[->, >=latex] (9) -- (15);

\draw[->, >=latex] (15) -- (10);
\draw[->, >=latex] (5) -- (10);

\draw[red, dashed, rounded corners] (1.5, -0.5) rectangle (2.5, 2.5) {};

\draw[red, dashed, rounded corners] (3.75, -0.5) rectangle (4.7, 2.5) {};

\draw[red, dashed, rounded corners] (5.75, -0.5) rectangle (7.05, 2.5) {};

\draw[red, dashed, rounded corners] (8.2, -0.5) rectangle (9.45, 2.5) {};
\end{tikzpicture}
\end{center}
We can use Theorem~\ref{dv-gv} to see that the difference between each pair of elements enclosed within the same red rectangle belongs to the kernel of $p^{*}$. Consequently, a $\cv$-matrix containing one of these two elements with positive sign and also containing the other element with negative sign generates a 2-dimensional cone of $\underline{\Delta^+_\seed(\cX)}$. 

Hence, 2-dimensional cones correspond to the seeds whose associated quiver is linearly oriented, as observed in the following table:

\begin{table}[H]
\begin{center}
\begin{tabular}{| c | c | c | c | c | c | c |}
\hline
Triangulation & \begin{tikzpicture}[scale=0.8]
    
    \foreach \x in {30,90,...,330} {
        \draw[fill] (\x:1 cm) -- (\x +60:1 cm);
      } 
      \draw[fill][red] (30:1 cm) -- node[black] {$\bullet$}(270:1 cm);
      \draw[fill][red] (90:1 cm) -- (270:1 cm);
      \draw[fill][red] (150:1 cm) -- node[black] {$\bullet$}(270:1 cm);

\node (1) at (0,0.4) {$\bullet$};          
\draw[->, >=latex] (0,0.4) -- (-0.35,-0.13);  
\draw[->, >=latex] (0.4,-0.2) -- (0.08,0.32);  

\end{tikzpicture} & 

\begin{tikzpicture}[scale=0.8]
    
    \foreach \x in {30,90,...,330} {
        \draw[fill] (\x:1 cm) -- (\x +60:1 cm);
      } 
      \draw[fill][red] (90:1 cm) -- node[black] {$\bullet$} (210:1 cm);
      \draw[fill][red] (90:1 cm) -- (270:1 cm);
      \draw[fill][red] (90:1 cm) -- node[black] {$\bullet$} (330:1 cm);

      \node (1) at (0,-0.5) {$\bullet$};
         
\draw[->, >=latex]  (-0.4,0.25) -- (-0.07,-0.4);  
\draw[->, >=latex] (0,-0.5) -- (0.37,0.17);  
\end{tikzpicture}
& 

\begin{tikzpicture}[scale=0.8]
    
    \foreach \x in {30,90,...,330} {
        \draw[fill] (\x:1 cm) -- (\x +60:1 cm);
      } 
      \draw[fill][red] (30:1 cm) -- node[black] {$\bullet$} (270:1 cm);
      \draw[fill][red] (30:1 cm) -- (210:1 cm);
      \draw[fill][red] (30:1 cm) -- node[black] {$\bullet$} (150:1 cm);

    \node (1) at (-0.4,-0.25) {$\bullet$};
         
\draw[->, >=latex] (0,0.5) -- (-0.35,-0.13);  
\draw[->, >=latex] (-0.4,-0.22) -- (0.32,-0.22); 
\end{tikzpicture}
& 
\begin{tikzpicture}[scale=0.8]
    
    \foreach \x in {30,90,...,330} {
        \draw[fill] (\x:1 cm) -- (\x +60:1 cm);
      } 
      \draw[fill][red] (90:1 cm) -- node[black] {$\bullet$} (210:1 cm);
      \draw[fill][red] (30:1 cm) -- (210:1 cm);
      \draw[fill][red] (210:1 cm) -- node[black] {$\bullet$} (330:1 cm);

    \node (1) at (0.4,0.25) {$\bullet$};
           
\draw[->, >=latex] (0.32,0.25) -- (-0.35,0.25);   
\draw[->, >=latex] (0,-0.5) -- (0.37,0.17);  

\end{tikzpicture}
&
\begin{tikzpicture}[scale=0.8]
    
    \foreach \x in {30,90,...,330} {
        \draw[fill] (\x:1 cm) -- (\x +60:1 cm);
      } 
      \draw[fill][red] (270:1 cm) -- node[black] {$\bullet$} (150:1 cm);
      \draw[fill][red] (330:1 cm) -- (150:1 cm);
      \draw[fill][red] (30:1 cm) -- node[black] {$\bullet$} (150:1 cm);

      \node (1) at (0.4,-0.25) {$\bullet$};
         
\draw[->, >=latex] (0.4,-0.2) -- (0,0.45);  
\draw[->, >=latex] (-0.5,-0.22) -- (0.32,-0.22);

\end{tikzpicture}
& 
\begin{tikzpicture}[scale=0.8]
    
    \foreach \x in {30,90,...,330} {
        \draw[fill] (\x:1 cm) -- (\x +60:1 cm);
      } 
      \draw[fill][red] (90:1 cm) -- node[black] {$\bullet$} (330:1 cm);
      \draw[fill][red] (150:1 cm) -- (330:1 cm);
      \draw[fill][red] (210:1 cm) -- node[black] {$\bullet$} (330:1 cm);

      \node (1) at (-0.4,0.25) {$\bullet$};
         
\draw[->, >=latex]  (-0.4,0.25) -- (-0.07,-0.4);           
\draw[->, >=latex] (0.4,0.25) -- (-0.3,0.25); 
\end{tikzpicture}\\ \hline
 &  &  &  &  &  &\\
$\cv-$matrix & $\begin{psmallmatrix} 
0 & -1 & 1 \\
-1 & 0 & 1 \\
-1 & 0 & 0 
\end{psmallmatrix}$ & $\begin{psmallmatrix} 
0 & 0 & -1 \\
1 & 0 & -1 \\
1 & -1 & 0 
\end{psmallmatrix}$ & $\begin{psmallmatrix} 
1 & 0 & 0 \\
0 & 1 & 0 \\
0 & 1 & -1 
\end{psmallmatrix}$ & $\begin{psmallmatrix} 
-1 & 1 & 0 \\
0 & 1 & 0 \\
0 & 0 & 1 
\end{psmallmatrix}$ & $\begin{psmallmatrix} 
0 & 0 & 1 \\
0 & -1 & 0 \\
-1 & 0 & 0 
\end{psmallmatrix}$ & $\begin{psmallmatrix} 
0 & 0 & -1 \\
0 & -1 & 0 \\
1 & 0 & 0 
\end{psmallmatrix}$ \\
&  &  &  &  &  &\\
\hline
& \multicolumn{2}{c|}{} & \multicolumn{2}{c|}{} & \multicolumn{2}{c|}{} \\
$C_{\seed'}^TB_Q \beta_{\seed} \geq \textbf{0}$ & \multicolumn{2}{c|}{$\begin{smallmatrix} 
\beta_1-\beta_2+\beta_3 \geq 0 \\
-\beta_2 \geq 0 \\
-\beta_1+\beta_2-\beta_3 \geq 0
\end{smallmatrix}$ } & \multicolumn{2}{c|}{$\begin{smallmatrix} 
\beta_2 \geq 0 \\
-\beta_1+2\beta_2-\beta_3 \geq 0 \\
-\beta_2 \geq 0
\end{smallmatrix}$} & \multicolumn{2}{c|}{$\begin{smallmatrix} 
-\beta_2 \geq 0 \\
\beta_1+\beta_3 \geq 0 \\
\beta_2 \geq 0
\end{smallmatrix}$} \\ 
& \multicolumn{2}{c|}{} & \multicolumn{2}{c|}{} & \multicolumn{2}{c|}{} \\ \hline

& \multicolumn{2}{c|}{} & \multicolumn{2}{c|}{} & \multicolumn{2}{c|}{} \\
$[C_{\seed'}^TB_Q]^{=}$ & \multicolumn{2}{c|}{$\begin{psmallmatrix} 
1 & -1 & 1 \\
-1 & 1 & -1 \\
\end{psmallmatrix}$ } & \multicolumn{2}{c|}{$\begin{psmallmatrix} 
0 & 1 & 0 \\
0 & -1 & 0 \\
\end{psmallmatrix}$} & \multicolumn{2}{c|}{$\begin{psmallmatrix} 
0 & -1 & 0 \\
0 & 1 & 0 \\
\end{psmallmatrix}$} \\ 
& \multicolumn{2}{c|}{} & \multicolumn{2}{c|}{} & \multicolumn{2}{c|}{} \\ \hline
\end{tabular}
\caption{2-dimensional cones $\mathbb{A}_3$.}
\label{tab:2A3}
\end{center}
\end{table}

 In accordance with \S~\ref{dim}, we can see that in each case considered in Table~\ref{tab:2A3}, inequalities $c_{1;\seed'} B_Q  \beta_{\seed}\geq 0$ and $c_{3;\seed'} B_Q  \beta_{\seed}\geq 0$ belong to the set of implicit equalities of $C_{\seed'}^TB_Q \beta_{\seed} \geq \textbf{0}$ and $\rank([C_{\seed'}^TB_Q]^{=})=1$. 

 Next, the seeds giving rise to $1$-dimensional cones are those whose quiver is a cyclic orientation of a triangle. The corresponding seeds and the inequalities they induce are presented in Table~\ref{tab:A31}. In this case all the inequalities of the system are implicit equalities. Moreover, we have that $\rank([C_{\seed'}^TB_Q]^{=})=2$ and the one dimensional cone defined by these seeds is precisely the linear space spanned by the kernel of $p^{*}$.   

\begin{table}[H]
\begin{center}
\begin{tabular}{| c | c | c | c |}
\hline
Triangulation & $\cv-$matrix & $C_{\seed'}^TB_Q \beta_{\seed} \geq \textbf{0}$ & $[C_{\seed'}^TB_Q]^{=}$ \\ \hline
\begin{adjustbox}{trim=0 0 0 -0.15cm}\begin{tikzpicture}[scale=0.8]
        \foreach \x in {30,90,...,330} {
        \draw[fill] (\x:1 cm) -- (\x +60:1 cm);
      } 
      \draw[fill][red] (30:1 cm) -- node (1) [black] {$\bullet$} (270:1 cm);
      \draw[fill][red] (30:1 cm) -- node (2) [black] {$\bullet$}(150:1 cm);
      \draw[fill][red] (150:1 cm) -- node (3) [black] {$\bullet$}(270:1 cm);

\draw[->, >=latex] (0,0.5) -- (0.4,-0.2);  
\draw[->, >=latex] (0.32,-0.25) -- (-0.35,-0.25);
\draw[->, >=latex] (-0.4,-0.2) -- (-0.02,0.42); 
\end{tikzpicture} \end{adjustbox} & \begin{adjustbox}{trim=0 0 0 1.1cm}$\begin{psmallmatrix} 
0 & -1 & 0 \\
1 & -1 & 0 \\
0 & 0 & 1 
\end{psmallmatrix}$ \end{adjustbox} & \multirow{2}{*}{ $\begin{smallmatrix} 
\beta_2 \geq 0 \\
\beta_1-\beta_2+\beta_3 \geq 0 \\
-\beta_1-\beta_3 \geq 0
\end{smallmatrix}$ } & \multirow{2}{*}{$\begin{psmallmatrix} 
0 & 1 & 0 \\
1 & -1 & 1 \\
-1 & 0 & -1 \\
\end{psmallmatrix}$} \\ 
\begin{adjustbox}{trim=0 0 0 -0.15cm} \begin{tikzpicture}[scale=0.8]
    
    \foreach \x in {30,90,...,330} {
        \draw[fill] (\x:1 cm) -- (\x +60:1 cm);
      } 
      \draw[fill][red] (90:1 cm) -- node (1) [black] {$\bullet$} (210:1 cm);
      \draw[fill][red] (210:1 cm) -- node (2) [black] {$\bullet$} (330:1 cm);
      \draw[fill][red] (90:1 cm) -- node (3) [black] {$\bullet$} (330:1 cm);

\draw[->, >=latex] (0,-0.5) -- (-0.4,0.2);  
\draw[->, >=latex] (-0.5,0.25) -- (0.32,0.25);
\draw[->, >=latex] (0.4,0.2) -- (0.02,-0.44); 
\end{tikzpicture} \end{adjustbox}
 & \begin{adjustbox}{trim=0 0 0 1.1cm} $\begin{psmallmatrix} 
1 & 0 & 0 \\
0 & -1 & 1 \\
0 & -1 & 0 
\end{psmallmatrix}$ \end{adjustbox} &  & \\ \hline
\end{tabular}
\caption{1-dimensional cones $\mathbb{A}_3$.}
\label{tab:A31}
\end{center}
\end{table}

The cluster complex $\underline{\Delta_{\seed}^+(\cX)}$ can be visualized in Figure~~\ref{fig:A3_1}.

\begin{figure}[H]
\begin{center}
\begin{tikzpicture}[scale=1]
\node (1) at (0,0) {\includegraphics[scale=0.45]{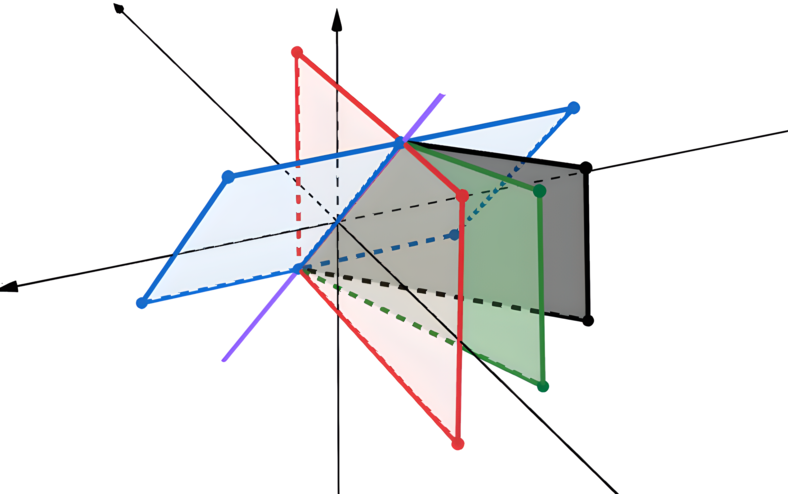}};
\node at (-4,4) { $\beta_1$};
\node at (-6,-1) { $\beta_2$};
\node at (-0.5,3.7) { $\beta_3$};
\node[color=blue] at (4,2.5) { $-\beta_1-\beta_3=0$};
\node[color=black] at (5,-1.3) { $-\beta_1+\beta_2-\beta_3=0$};
\node[color=teal] at (4,-2.5) { $-\beta_1+2\beta_2-\beta_3=0$};
\node[color=red] at (1,-3.5) { $\beta_2=0$};
\node[color=violet] at (-3.3,-2.5) { $\ker(p^{*})=\Big\langle \begin{psmallmatrix}
 1 \\
 0 \\
-1  
\end{psmallmatrix} \Big\rangle$};
\end{tikzpicture}
\end{center}
    \caption{$\underline{\Delta_{\seed}^+(\cX)}$.}
    \label{fig:A3_1}
\end{figure}

Since all the cones of $\underline{\Delta^+_{\seed}(\cX)}$ contain the linear space $\text{ker}(p^*)$ we have that the projection of $\underline{\Delta^+_{\seed}(\cX)}$ in $N_{\mathbb R}/\text{ker}(p^*)$ is a fan, in Figure~\ref{fig:A3_2} we display this fan.
More precisely, let $L$ be the orthogonal complement of $\ker(p^*)$ inside $N_{\R}$.
Then $N_{\R}/\ker(p^*)$ is canonically identified with $L$ and Figure~\ref{fig:A3_2} corresponds to the fan obtained by intersecting $\underline{\Delta^+_{\seed}(\cX) }$ with $L$. 
Furthermore, the image includes the {\bf c}-vectors and the seeds responsible for generating each cone of co-dimension one and maximal cone, respectively, as indicated within their corresponding rays and cones:

\begin{figure}[H]
\begin{center}
\begin{tikzpicture}[scale=0.8]
\draw[blue,thick,<-] (-6,0)--(6,0);
\node[color=blue] at (2,-0.2) {\tiny $-\beta_1-\beta_3=0$};

\node[color=blue] at (6.6,0) {$ \overline{\begin{psmallmatrix}
 0 \\
-1 \\
 0  
\end{psmallmatrix}}$};

\node[color=blue] at (-6.5,0) {$ \overline{\begin{psmallmatrix}
 0 \\
 1 \\
 0  
\end{psmallmatrix}}$};

\draw[red,thick,<-] (0,6)--(0,-6);
\node[color=red, rotate=-90] at (-0.3,-2) {\tiny $\beta_2=0$};

\node[color=red] at (0,6.5) {$\overline{\begin{psmallmatrix}
0 \\
0 \\
1  
\end{psmallmatrix}}=\overline{\begin{psmallmatrix}
1 \\
0 \\
0  
\end{psmallmatrix}}$};

\node[color=red] at (0,-6.5) {$\overline{\begin{psmallmatrix}
-1 \\
 0 \\
 0  
\end{psmallmatrix}}=\overline{\begin{psmallmatrix}
 0 \\
 0 \\
-1  
\end{psmallmatrix}}$}; 
\node[color=teal, rotate=-60] at (0.9,-2.2) {\tiny $-\beta_1+2\beta_2-\beta_3=0$};

\draw[teal,thick,-] (0,0) -- (3.2,-5.5);
\node[color=teal] at (3.3,-6.3) {$\overline{\begin{psmallmatrix}
-1 \\
-1 \\
-1  
\end{psmallmatrix}}$};

\draw[black,thick,-] (0,0) -- (5.5,-3);

\node[color=black, rotate=-29] at (1.9,-1.3) {\tiny $-\beta_1+\beta_2-\beta_3=0$};

\node[color=black] at (7,-3.5) {$\overline{\begin{psmallmatrix}
 0 \\
-1 \\
-1  
\end{psmallmatrix}}=\overline{\begin{psmallmatrix}
-1 \\
-1 \\
 0  
\end{psmallmatrix}}$};

\node[violet] at (0,0) { $\bullet$};


\node (1) at (-3,-3) {$\begin{psmallmatrix}
0 & 1 & 0 \\
-1 & 0 & -1 \\
0 & 1 & 0 \\
1 & 0 & 0 \\
0 & 1 & 0 \\
0 & 0 & 1 
\end{psmallmatrix}$};

\node (2) at (1.3,-4.95) {$\begin{psmallmatrix} 
0 & -1 & 0 \\
1 & 0 & 1 \\
0 & -1 & 0 \\
-1 & 1 & 0 \\
0 & 1 & 0 \\
0 & 1 & -1 
\end{psmallmatrix}$};

\node (3) at (3.7,-3.6) {$\begin{psmallmatrix}
0 & 1 & 0 \\
-1 & 0 & -1 \\
0 & 1 & 0 \\
0 & -1 & 1 \\
1 & -1 & 1 \\
1 & -1 & 0 
\end{psmallmatrix}$};

\node (4) at (4.6,-1) {$\begin{psmallmatrix} 
0 & -1 & 0 \\
1 & 0 & 1 \\
0 & -1 & 0 \\
0 & 0 & -1 \\
-1 & 1 & -1 \\
-1 & 0 & 0 
\end{psmallmatrix}$};

\node (5) at (3,3) {$\begin{psmallmatrix}
0 & 1 & 0 \\
-1 & 0 & -1 \\
0 & 1 & 0 \\
0 & 0 & -1 \\
0 & -1 & 0 \\
-1 & 0 & 0 
\end{psmallmatrix}$};

\node (6) at (-3,3) {$\begin{psmallmatrix} 
0 & -1 & 0 \\
1 & 0 & 1 \\
0 & -1 & 0 \\
0 & 0 & 1 \\
0 & -1 & 0 \\
1 & 0 & 0 
\end{psmallmatrix}$};

\end{tikzpicture}
\end{center}
    \caption{The fan in $ N_{\R}/\text{ker}(p^*)$ induced by $\underline{\Delta^+_{\seed}(\cX)}$.}
    \label{fig:A3_2}
\end{figure}

By Corollary~\ref{Corollary-normalvectorhyperplane}, we can see the normal vector of each of the walls in the Auslander-Reiten quiver given by the {\bf g}-vectors.

\begin{center}
\begin{tikzpicture}[scale=1]
    
\node (2) at (2,0) {100};
\node (3) at (4,0) {-110};
\node (4) at (6,0) {00-1};

\node (7) at (3,1) {010};
\node (8) at (5,1) {-11-1};
\node (9) at (7,1) {0-10};

\node (12) at (2,2) {001};
\node (13) at (4,2) {01-1};
\node (14) at (6,2) {-100};

\draw[->, >=latex] (2) -- (7);
\draw[->, >=latex] (7) -- (3);
\draw[->, >=latex] (3) -- (8);
\draw[->, >=latex] (8) -- (4);
\draw[->, >=latex] (4) -- (9);

\draw[->, >=latex] (12) -- (7);
\draw[->, >=latex] (7) -- (13);
\draw[->, >=latex] (13) -- (8);
\draw[->, >=latex] (8) -- (14);
\draw[->, >=latex] (14) -- (9);

\draw[red, dashed, rounded corners] (2.55, 0.5) rectangle (3.45, 1.5) {};

\draw[teal, dashed, rounded corners] (3.55, -0.5) rectangle (4.45, 2.5) {};

\draw[black, dashed, rounded corners] (4.55, 0.5) rectangle (5.45, 1.5) {};

\draw[blue, dashed, rounded corners] (5.55, -0.5) rectangle (6.45, 2.5) {};
\end{tikzpicture}
\end{center}

\end{example}

\begin{remark}
Although in case $\mathbb{A}_3$ the maximal cones only correspond to the bipartite seed this is not true in general. In fact, for $k\geq 2$, seeds associated with quivers $\mathbb{A}_{2k+1}$ whose orientation is not bipartite may also determine maximal cones of $\underline{\Delta^+_{\seed}(\cX)} $.
\end{remark}

\section{Acknowledgments}
This work was supported by Dirección General de Asuntos del Personal Académico, Universidad Nacional
Autónoma de México 2022, PAPIIT project
[IA100122 to C.M. and A.N.C]; and the Consejo Nacional de Humanidades, Ciencias y Tecnologías (CONAHCYT)
[CF-2023-G-106 to A.N.C.].

\noindent{\sc{Carolina Melo\\
Departamento de Matem\'aticas, Universidad Nacional de Colombia - Sede Bogot\'a, Ave Cra 30 \#45-3, Bogot\'a, Colombia}}\\
{\it{e-mail:}} \href{mailto:amelol@unal.edu.co}{amelol@unal.edu.co} \medskip

\noindent{\sc{Alfredo N\'ajera Ch\'avez\\
Instituto de Matem\'aticas UNAM Unidad Oaxaca, Le\'on 2, altos, Oaxaca de Ju\'arez, Centro Hist\'orico, 68000 Oaxaca, Mexico}}\\
{\it{e-mail:}} \href{mailto:najera@im.unam.mx}{najera@im.unam.mx}

\end{document}